\newcommand{\N}{\mathbb{N}}
\newcommand{\Q}{\mathbb{Q}}
\newcommand{\Z}{\mathbb{Z}}
\newcommand{\bw}{\mathbb{S}}
\newcommand{\pal}{\mathbb{P}}
\renewcommand\Tilde[1]{{#1}\string~}
\newcommand{\vect}[1]{{\protect\boldsymbol{\mathrm{#1}}}}
\newcommand{\resp}[1]{(resp.~#1)}
\newtheorem{theorem}{Theorem}
\newtheorem{lemma}[theorem]{Lemma}
\newtheorem{corollary}[theorem]{Corollary}
\newtheorem{proposition}[theorem]{Proposition}
\newtheorem{definition}{Definition}
\newtheorem{example}{Example}
\let\dsp\displaystyle
\let\iff\Leftrightarrow
\def\dfrac#1#2{\frac{\dsp#1}{\dsp#2}}
\let\capori\cap
\def\cap{\mathrel\capori}
\let\cupori\cup
\def\cup{\mathrel\cupori}
\title{On the Number of Balanced Words of Given Length and Height over a Two-Letter Alphabet}
\author{Nicolas B\' edaride \footnote{Université Aix-Marseille III, Avenue de  l'Escadrille Normandie-Niémen, 13397 Marseille Cedex 20, France,{\ttfamily nicolas.bedaride@univ-cezanne.fr}}, 
Eric Domenjoud, 
  Damien Jamet,
  Jean-Luc R\'emy\footnote{Loria - Université Nancy 1 - CNRS, Campus Scientifique,
BP 239, 54506 Vand{\oe}uvre-les-Nancy, France,
\ttfamily{\{eric.domenjoud, damien.jamet, jean-luc.remy\}@loria.fr}}}
\begin{document}

\maketitle
\begin{abstract}
We exhibit a recurrence on the number of discrete line segments joining two integer points in the plane
using an encoding of such segments as balanced words of given length and height over the
two-letter alphabet $\{0,1\}$. We give generating functions and study the asymptotic
behaviour. As a particular case, we focus on the symmetrical discrete
segments which are encoded by balanced palindromes.
\end{abstract}

\section{Introduction}\label{intro}
The aim of this paper is to study some properties of discrete lines by using combinatorics on words.
The first investigations on discrete lines are dated back to
J.~Bernoulli\cite{Ber1772}, E.B.~Christoffel~\cite{Chr1875}, A.~Markoff~\cite{Mar1882} and more recently to G.A.~Hedlund and H.~Morse~\cite{MorHed1940} who introduced the terminology of \emph{Sturmian sequences}, for the 
ones defined on a two-letter alphabet and coding lines with irrational slope.  These works
gave the first theoretical framework for discrete lines.  A sequence $u \in \{0,1\}^\N$ is
Sturmian if and only if it is \emph{balanced} and not-eventually periodic. From the 70's,
H.~Freeman~\cite{Fre1974}, A.~Rosenfeld~\cite{Ros1974b} and S.~Hung~\cite{Hun1985} extended these investigations to lines with rational slope and studied \emph{discrete  segments}. 
In~\cite{Rev1991}, J.-P.~Reveill\`es defined arithmetic discrete lines as sets of
integer points between two parallel Euclidean lines. There are two sort of arithmetic discrete lines, the \emph{naive} and the \emph{standard} one. 

There exists a direct relation between naive \resp{standard} discrete arithmetic lines and
Sturmian sequences. Indeed, given a Sturmian sequence $u \in \{0,1\}^\N$, if one
associates the letters $0$ and $1$ with a shifting along the vector $\vect{e_1}$ and
$\vect{e_2}$ \resp{the vectors $\vect{e_1}$ and $\vect{e_1}+\vect{e_2}$} respectively,
then, the vertices of the obtained broken line are the ones of a naive arithmetic discrete
line \resp{a standard arithmetic discrete line} with the same slope  (see Figure~\ref{fig::fig_ex_seg}). 

Let $s : \N \longmapsto \N$ be the map defined by:
\begin{equation*}
\begin{array}{ccccl}
s & : & \N & \longrightarrow & \N \\
  &   & L  & \mapsto         & \#\{w \in \{0,1\}^L, \, w \text{ is balanced}\},
\end{array}
\end{equation*}
where $\#E$ denotes the cardinal of the set $E$. 
In other words, given $L \in \N$, $s(L)$ is the number of balanced words of length $L$, or
equivalently, the number of discrete segments of any slope $\alpha \in [0,1]$ of length
$L$. In~\cite{Lip1982}, it is proved that 
\begin{equation*}
s(L) = 1 + \sum_{i=1}^L{(L-i+1)\varphi(i)},
\end{equation*}
 where $\varphi$ is Euler's totient function, that is, $\varphi(n)$ is the number of positive
 integers smaller that $n$ and coprime with $n$. Alternative proofs of this result can be
 found in~\cite{Mig1991,BerPoc1993,CASSAIGNE:2002:HAL-00133481:1, BerLav1988}. 
 
In~\cite{LucLuc2005,LucLuc2006}, de Luca and De Luca investigated the number $p(L)$ of balanced palindrome
words of length $L \in \N$, that is the balanced words coding a symmetrical discrete
segments of length $L$. They proved
\begin{equation*}
p(L) = 1 + \sum_{i=0}^{\lceil L/2 \rceil - 1}{\varphi(L-2i)}.
\end{equation*}

In the present work, we investigate the following question. Given two integer points of
$\Z^2$ (also called \emph{pixels} in the discrete geometry literature~\cite{ChaMon1991}),
how many naive discrete segments link these points (see Figure~\ref{fig::fig_ex_seg})? In
other words, given $L \in \N$ and $h \in \N$, how much is $s(L,h)=\# \{ w \in \{0,1\}^L,
\vert w \vert_1=h \text{ and } w \text{ balanced}\}$? We exhibit a recurrence relation
on $s(L,h)$ and generating functions and we study the asymptotic behaviour of the maps
$s$. After this, we focus on the number $p(L,h)$ of balanced palindromes of given length and height for which we also exhibit a recurrence relation and a generating function.

We are interested in these formulas to have a better understanding of the space of Sturmian sequences. Indeed the main combinatorial properties of theses sequences can be seen in similar formulas. For example the formula of $s(L)$ is deeply related to the number of bispecial words of length $L$, see \cite{CASSAIGNE:2002:HAL-00133481:1}. One main objective is to generalize these formulas to dimension two in way to understand the combinatorics structure of discrete planes.  To a discrete plane is associated a two dimensional word. The study of these words is an interesting problem. The complexity of such a word is not known, the first step in its computation is the following article \cite{Dom.Jam.Verg.Vui.10}.  

\begin{figure}
\centering
\includegraphics{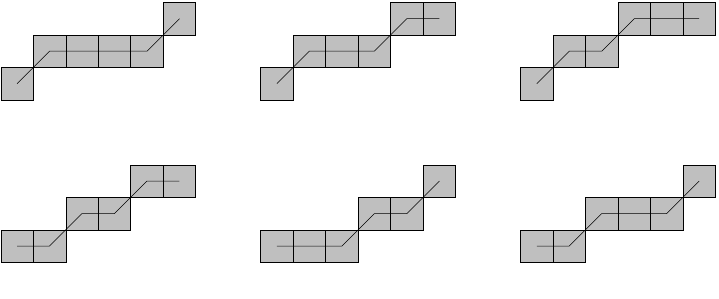}
\caption{There exist six discrete segments of length 5 and height 2. \label{fig::fig_ex_seg}}
\end{figure}

\section{Basic notions and notation}

Let $\{0,1\}^*$ and $\{0,1\}^\N$ be the set of respectively finite and
infinite words on the alphabet $\{0,1\}$. We denote the empty word by
$\epsilon$. For any word $w\in\{0,1\}^*$, $|w|$ denotes the length of
$w$, and $|w|_0$ and $|w|_1$ denote respectively the number of $0$'s
and $1$'s in $w$. $|w|_1$ is also called the \emph{height} of $w$.  
A (finite or infinite) word $w$ is \emph{balanced} if and only if for
any finite subwords $u$ and $v$ of $w$ such that $|u|=|v|$, we have
$\bigl||u|_0-|v|_0\bigr|\leq 1$. A (finite or infinite) word $w$ is of type $0$ \resp{type
  $1$} if the word $w$ does not contain $11$  \resp{the word $00$}.  
We denote by $\bw$ the set of finite balanced words and by $\bw^0$
\resp{$\bw^1$} the set of finite balanced words of type $0$
\resp{$1$}. 

Let $L,h \in \N$ and $\alpha, \beta \in \{0,1\}^*$. We denote by
$\bw_{\alpha,\beta}(L,h)$ the set of elements of $\bw$ of length
$L$ and height $h$, of which $\alpha$ is a prefix and $\beta$ is
a suffix. Note that $\alpha$ and $\beta$ may
overlap. For short, we usually write $\bw(L,h)$ instead of
$\bw_{\epsilon,\epsilon}(L,h)$. Observe that $\bw(L,h)$ is the set of
finite balanced words which encode the discrete segments between
$(0,0)$ and $(L,h)$. Remark also that $L-h$ is the width of the word, that is the number of zero's. We can count by height or by width, it is the same and this symmetry is used several times in the paper.

We extend the definition of the function $s(L,h)$ on $\Z^2$ by: 
\[
s(L,h) = \left\{\begin{array}[c]{l@{\qquad}l}
\# \bw(L, h \bmod L) & \mbox{if $L> 0$}, \\
1 & \mbox{if $L=0$ and $h=0$}, \\
0 & \mbox{if $L<0$ or $L=0$ and $h\neq0$}
\end{array}\right.
\]
Observe that for $0 \leq h \leq L$, since $\#\bw(L,L)=\#\bw(L,0)$, one
has $s(L,h)= \# \bw(L,h)$. 

For $0\leq h \leq L$ and $\alpha,\beta \in \{0,1\}^*$ we denote by
$s_{\alpha,\beta}(L,h)$ the cardinal of
$\bw_{\alpha,\beta}(L,h)$. Notice that $s_{\alpha,\beta}(L,h) =
s_{\overline\alpha,\overline\beta}(L,L-h)$, 
where $\overline{w}$ is the word obtained by replacing the $0$'s with
$1$'s and the $1$'s with $0$'s in $w$. 

\section{General case}

\subsection{Main theorem}
In the present section, we prove the following result:
\begin{theorem}\label{thm::thm1}
For all $L,h \in \N$ satisfying $0 \leq h \leq L/2$, one has:
\[
s(L,h) = s(L-h-1,h) + s(L-h,h) - s(L-2h-1,h) + s(h-1,L-2) + s(h-1,L-1).
\]
\end{theorem}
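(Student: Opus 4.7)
The plan is to partition $\bw(L,h)$ into classes matched to the five terms of the recurrence. The hypothesis $h \le L/2$ is crucial: for $h < L/2$ strictly, no word of $\bw(L,h)$ contains the factor $11$, and only in the boundary case $L = 2h$ may some words contain $11$. Apart from this boundary case, each $w \in \bw(L,h)$ admits the canonical run-length decomposition
\[
w = 0^{a_0}\,1\,0^{a_1}\,1\,\cdots\,1\,0^{a_h},
\]
with $a_0, a_h \ge 0$, $a_i \ge 1$ for $1 \le i \le h-1$, and $a_0 + a_1 + \cdots + a_h = L-h$.

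I would then translate balancedness of $w$ into conditions on the run lengths: the interior sequence $(a_1,\ldots,a_{h-1})$ must take only two consecutive integer values $q$ and $q+1$ with $q = \lfloor (L-h)/h\rfloor$ or a neighbour, must itself form a balanced word over $\{q,q+1\}$, and the boundary values $a_0, a_h$ must lie in a prescribed range compatible with the interior. After relabeling $q \mapsto 0$ and $q+1 \mapsto 1$, each admissible interior corresponds to a balanced word of length $h-1$ over $\{0,1\}$. The two terms $s(h-1,L-2)$ and $s(h-1,L-1)$ are then expected to count these interiors, split according to the two possible boundary types, or, equivalently, according to whether the total $a_0 + a_h$ attains its extremal value.

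The remaining three terms would come from classifying the balanced words by the boundary data $(a_0,a_h)$. The two ``generic'' classes, corresponding to the two extremal choices of the boundary runs, are each parametrized by a balanced word of length $L-h$ or $L-h-1$ with the same height $h$, contributing the terms $s(L-h,h)$ and $s(L-h-1,h)$. The correction $-s(L-2h-1,h)$ arises by inclusion--exclusion, as the configurations counted by both classes are exactly indexed by $\bw(L-2h-1,h)$.

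The principal obstacle will be ensuring that this partition covers $\bw(L,h)$ exactly once after the inclusion--exclusion correction, and that the five class sizes exactly match the claimed quantities. This requires a careful case analysis of the extremal configurations $a_0 = 0$ or $a_h = 0$, a proper treatment of the arithmetic invariant $(L-h) \bmod h$, and separate handling of the boundary case $L = 2h$ where the factor $11$ may occur and the run-length decomposition must instead be taken in the dual form $1^{b_0}\, 0\, 1^{b_1}\,\cdots$. Once these cases are settled, matching the five counts becomes a direct size computation. Here the symmetry $s_{\alpha,\beta}(L,h) = s_{\overline\alpha,\overline\beta}(L,L-h)$ recorded earlier will be useful to transfer results between the two dual decompositions.
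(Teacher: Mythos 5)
Your plan has the right flavour (shortening the runs of $0$'s and doing inclusion--exclusion over boundary classes), but as written it has two genuine gaps. First, the step you pass over in one sentence --- ``translate balancedness of $w$ into conditions on the run lengths'' --- is precisely the hard part of any proof along these lines, and the statement you propose is not a usable characterisation: the admissible range for the boundary runs $a_0,a_h$ is not a range determined by $L,h,q$ alone but depends on the interior word itself, so the sufficiency direction (``interior balanced over $\{q,q+1\}$ $+$ boundary runs in a prescribed range $\Rightarrow$ $w$ balanced'') would fail as sketched. What is actually needed, and what the paper proves, is much more local: a single shortening step (the map $\theta$ that deletes one $0$ from each maximal run, inverse to the Sturmian morphism $\varphi:0\mapsto0,\ 1\mapsto01$) preserves balancedness in both directions and induces bijections $\bw_{0,0}(L,h)\to\bw(L-h-1,h)$, $\bw_{0,1}(L,h)\to\bw_{\epsilon,1}(L-h,h)$, $\bw_{1,0}(L,h)\to\bw_{1,\epsilon}(L-h,h)$, $\bw_{1,1}(L,h)\to\bw_{1,1}(L-h+1,h)$ (the paper's Lemmas on $\varphi$ and $\theta$). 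Your proposal assumes a full ``derived word'' version of this without stating or proving it, and proving it is exactly where the work lies.

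Second, the attribution of the five terms to your classes is asserted rather than derived, and it does not match a correct accounting. In the actual derivation, $s(h-1,L-1)$ counts the words of $\bw_{1,1}(L,h)$ (those with $a_0=a_h=0$), obtained by iterating the shortening on that class and using the $0\leftrightarrow1$ exchange $s_{\alpha,\beta}(L,h)=s_{\overline\alpha,\overline\beta}(L,L-h)$ together with the reduction $s(L,h)=s(L,h\bmod L)$; $s(h-1,L-2)$ is not a count of ``interiors of a second boundary type'' but equals $s_{1,1}(L-h,h)$, the overlap of the images $\bw_{\epsilon,1}(L-h,h)$ and $\bw_{1,\epsilon}(L-h,h)$; and $-s(L-2h-1,h)$ equals $-s_{0,0}(L-h,h)$, the complement term in the inclusion--exclusion $s_{\epsilon,1}+s_{1,\epsilon}=s-s_{0,0}+s_{1,1}$ applied at length $L-h$, not an overlap of your two ``generic'' classes. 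The modular identifications needed to put the result in the stated form (e.g.\ $s(h-1,L-h-1)=s(h-1,L-2)$ because $L-2\equiv L-h-1 \pmod{h-1}$) are also nowhere checked. So while your decomposition by first/last boundary runs could be reorganised into the paper's partition $\bw(L,h)=\bw_{0,0}\uplus\bw_{0,1}\uplus\bw_{1,0}\uplus\bw_{1,1}$, the proposal in its current form neither establishes the key bijections nor correctly matches the classes to the five terms.
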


In order to prove Theorem~\ref{thm::thm1}, let us now introduce some
technical definitions and lemmas. Let $\varphi$ be the morphism
defined on  $\{0,1\}^*$ and $\{0,1\}^\N$ by:
\[
\varphi:
\begin{array}{ccc}
                  0 & \mapsto & 0 \\
                  1 & \mapsto & 01
\end{array}
\]
Let us recall that $\varphi$ is a Sturmian morphism, that is, for any Sturmian sequence $u$, the sequence $\varphi(u)$ is Sturmian \cite{Par1997,MigSeeb1993}. 
 Moreover: 
\begin{lemma}\cite{Lot2002}\label{lem::Lot2002}
Let $w \in \{0,1\}^\N$.
\begin{enumerate}
\item If $0w$ is Sturmian of type $0$, then there exists a unique Sturmian sequence $u$ satisfying $\varphi(u)=0w$. 
\item $w$ is Sturmian if and only if so is $\varphi(w)$.
\end{enumerate}
\end{lemma}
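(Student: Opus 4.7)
The key property of $\varphi$ is that every image word begins with $0$ and contains no factor $11$: indeed $\varphi(0)=0$ and $\varphi(1)=01$ both start with $0$, and any $1$ in $\varphi(u)$ originates from a block $\varphi(1)=01$ and is therefore preceded by its own $0$. Inverting this remark drives the whole proof. I would attack Part 1 in two stages: first show, purely combinatorially, that any type-$0$ sequence beginning with $0$ admits a unique desubstitution into the blocks $0$ and $01$, producing a unique $u\in\{0,1\}^\N$ such that $\varphi(u)=0w$ as words; second, show that this $u$ inherits Sturmianity from $0w$. Part 2 then reduces to Part 1.

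For the desubstitution, I would induct on block boundaries. Suppose we have parsed a prefix of $0w$ ending on a block boundary and the next remaining letter is $0$; the base case is immediate because $0w$ starts with $0$. If the letter following that $0$ is $0$, we are forced to consume $\varphi(0)=0$; if it is $1$, we are forced to consume $\varphi(1)=01$, and the letter immediately following must itself be $0$, for otherwise $0w$ would contain $11$, contradicting type $0$. Either way the invariant is preserved and the choice of block was unique. This produces the required $u$ as a word and simultaneously establishes that $\varphi$ is injective on $\{0,1\}^\N$, yielding uniqueness.

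The genuine content is that the parsed $u$ is Sturmian. Using the Hedlund--Morse characterization (balanced and not ultimately periodic), aperiodicity transfers cheaply: any ultimately periodic structure on $u$ would yield one on $\varphi(u)=0w$ by applying $\varphi$, contradicting the Sturmian hypothesis. Balance, the main obstacle, amounts to the reverse implication ``$\varphi(u)$ balanced $\Rightarrow u$ balanced''. I would argue by contraposition using the Coven--Hedlund criterion that any unbalanced sequence contains two factors $0P0$ and $1P1$ sharing a palindromic centre $P$. From such a witnessing pair in $u$ one manufactures a pair of factors of $\varphi(u)$ of equal length whose numbers of $1$'s differ by at least $2$: the bookkeeping uses $|\varphi(z)|=|z|+|z|_1$ and $|\varphi(z)|_1=|z|_1$, together with local padding by a single letter on each side of $\varphi(P)$ to realign the two lengths. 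This is the technical crux and the step where I expect to spend the most care.

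Once Part 1 is in hand, Part 2 is almost free. The forward implication is exactly the statement that $\varphi$ is a Sturmian morphism, part of the stated background (Mignosi--S\'e\'ebold, Parvaix). Conversely, if $\varphi(w)$ is Sturmian then $\varphi(w)$ begins with $0$ and is of type $0$, so $\varphi(w)=0w'$ with $0w'$ Sturmian of type $0$; Part 1 then supplies a unique Sturmian sequence $u$ with $\varphi(u)=\varphi(w)$, and the injectivity of $\varphi$ established in the parsing step forces $u=w$, so $w$ itself is Sturmian.
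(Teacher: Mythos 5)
First, a point of comparison: the paper gives no proof of this lemma at all --- it is imported verbatim from Lothaire \cite{Lot2002} --- so there is no internal argument to measure you against. Your plan is a faithful reconstruction of the standard textbook proof: unique desubstitution of a type-$0$ sequence beginning with $0$ into the blocks $\varphi(0)=0$ and $\varphi(1)=01$ (which also gives injectivity of $\varphi$ on $\{0,1\}^\N$), transfer of aperiodicity in the easy direction, transfer of balance by contraposition via the Coven--Hedlund palindrome criterion, and deduction of Part~2 from Part~1 plus the (cited) fact that $\varphi$ is a Sturmian morphism. All of that is sound.

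The one step you defer --- the bookkeeping that turns a witness $\{0P0,\,1P1\}$ of unbalance in $u$ into a witness of unbalance in $\varphi(u)$ --- is exactly where the sketch, read literally, fails. ``Padding by a single letter on each side of $\varphi(P)$'' suggests comparing $01\varphi(P)01$ (length $|P|+|P|_1+4$, with $|P|_1+2$ ones) against a two-letter extension $x\,0\varphi(P)0\,y$ of $\varphi(0P0)$; but the left neighbour $x$ of the block $\varphi(0P0)$ inside $\varphi(u)$ is the last letter of the preceding block and may well be $1$, in which case the two factors have $1$-counts $|P|_1+2$ and $|P|_1+1$ and you obtain no contradiction with balance. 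The missing idea is an \emph{asymmetric} padding together with the observation that every $\varphi$-block begins with $0$: the letter immediately to the \emph{right} of $\varphi(0P0)$ in $\varphi(u)$ is forced to be $0$, so $0\varphi(P)00$ is a factor of $\varphi(u)$ with $|P|_1$ ones, while $1\varphi(P)01$ (obtained from $01\varphi(P)01$ by dropping its first letter) is a factor of the same length $|P|+|P|_1+3$ with $|P|_1+2$ ones; their difference is $2$ and $\varphi(u)$ is unbalanced. Equivalently, set $W=\varphi(P)0$, which is a palindrome whenever $P$ is (the identity $\Tilde{{(\varphi(P)0)}}=\varphi(P)0$ that the paper itself exploits in the proof of Lemma~\ref{lemme:pal:theta}), and observe that $0W0$ and $1W1$ are both factors of $\varphi(u)$. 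With this one observation supplied, your proof closes; without it, the central implication ``$\varphi(u)$ balanced $\Rightarrow u$ balanced'' is not established.
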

Since every balanced word is a factor of a Sturmian word, we directly deduce:
\begin{corollary}\label{cor::finite_balance}
If a finite word $w \in \{0,1\}^*$ is balanced then so is $\varphi(w)$.
\end{corollary}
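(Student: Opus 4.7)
The plan is to combine the classical fact that every finite balanced word extends to an infinite Sturmian sequence with part~(2) of Lemma~\ref{lem::Lot2002}. Given a finite balanced word $w$, I would first produce an infinite Sturmian sequence $u \in \{0,1\}^{\N}$ having $w$ as a factor, i.e.\ $u = x w y$ for some (possibly infinite) $x,y$. This embedding is the step the corollary's leading sentence is pointing at, so I would cite it as a known property of the family of balanced words rather than re-prove it.

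Next I would apply Lemma~\ref{lem::Lot2002}(2) to $u$: since $u$ is Sturmian, the sequence $\varphi(u)$ is Sturmian as well, and in particular balanced. Because $\varphi$ is a monoid morphism, the factorization $u = xwy$ yields $\varphi(u) = \varphi(x)\,\varphi(w)\,\varphi(y)$, so $\varphi(w)$ appears as a (finite) factor of $\varphi(u)$.

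To conclude, I would invoke the immediate observation that any factor of a balanced word is itself balanced: this is built into the definition, since the balance condition on $\varphi(u)$ quantifies over all pairs of equal-length finite subwords of $\varphi(u)$, and in particular over those lying inside $\varphi(w)$. Hence $\varphi(w)$ is balanced, which is the desired conclusion.

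The only non-routine step is the first one, namely extending an arbitrary finite balanced word to an infinite Sturmian sequence; for purely periodic balanced patterns one must be slightly careful because Sturmian sequences are by convention non-eventually-periodic, but any periodic balanced word is a factor of a periodic balanced infinite word and, by a small perturbation of the slope, also a factor of a genuine Sturmian sequence. Everything else in the argument is mechanical: morphism applied to a factorization gives a factorization, and factors of balanced words are balanced.
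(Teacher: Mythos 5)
Your argument is correct and is essentially the paper's own: the authors also deduce the corollary from the fact that every finite balanced word is a factor of a Sturmian sequence, together with point~2 of Lemma~\ref{lem::Lot2002} and the observation that factors of balanced sequences are balanced. Your extra remark about handling periodic balanced words via a nearby irrational slope is a sensible clarification of the embedding step that the paper simply cites as known.
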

 
\begin{definition}[$0$-erasing map]
Let $\theta : \{0,1\}^* \rightarrow \{0,1\}^*$ be the map defined by the recurrence relations:
\[
\begin{array}{lcll}
\theta (\epsilon)  & =  & \epsilon, \\
\theta (0^{\alpha+1}) & = & 0^\alpha & \quad \text{ for } \alpha \geq 0, \\
\theta (1v) & = & 1\theta (v), \\
\theta (0^{\alpha+1}1v) & = & 0^{\alpha}1 \theta(v) & \quad \text{ for } \alpha \geq 0, \\

\end{array}
\]
\end{definition}
Roughly speaking, $\theta$ \emph{erases} a $0$ in each \emph{maximal range} of $0$ in a given
word. In some sense, $\theta$ is the inverse of $\varphi$. Let us now prove some key
properties of $\theta$: 

\begin{lemma}\label{lem::lem2}
Consider the set $\bw^0_{0,1} =\{u \in \bw^0,\, \exists w \in \{0,1\}^*, \, u=0w1\}$ of words in $\bw^0$ of the form $0w1$
with $w \in \{0,1\}^*$. Then 
\begin{itemize}
\item The map $\theta$ restricted to $\bw^0_{0,1}$ is a bijection on $\bw_{0,1}$. The map $\varphi$ restricted to $\bw_{0,1}$ is a bijection on $\bw^0_{0,1}$.

\item Moreover we have $\theta(\varphi(w1)) = w1$ for all $w$.
\end{itemize}
\end{lemma}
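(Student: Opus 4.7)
The plan is to prove the identity $\theta(\varphi(w1)) = w1$ first, which yields the half $\theta\circ\varphi = \mathrm{id}$ of the bijection, and then to prove the reverse identity $\varphi(\theta(u)) = u$ on $\bw^0_{0,1}$, giving the other half. For the first identity I argue by induction on $|w|$. The base case $w = \epsilon$ gives $\theta(\varphi(1)) = \theta(01) = 1$ by the fourth rule of the definition of $\theta$ with $\alpha = 0$ and $v = \epsilon$. For the inductive step, write $w = aw'$; since $\varphi(w'1)$ always starts with $0$, set $\varphi(w'1) = 0x$ with $x \neq \epsilon$. If $a = 1$ then $\varphi(w1) = 010x$, and the same rule applied with $\alpha = 0$ yields $\theta(010x) = 1\cdot\theta(0x) = 1\,w'1 = w1$ by the induction hypothesis. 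If $a = 0$ then $\varphi(w1) = 00x$, and a short case analysis on the first letter of $x$ shows $\theta(00x) = 0\cdot\theta(0x) = 0\,w'1 = w1$.

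Next I verify $\varphi(\bw_{0,1})\subseteq\bw^0_{0,1}$. Balance transfers via Corollary~\ref{cor::finite_balance}; type~$0$ holds because every $1$ in $\varphi(v)$ is the second symbol of a block $\varphi(1)=01$ and is therefore preceded by $0$; the prefix~$0$ and suffix~$1$ conditions follow from the morphism identities $\varphi(0v)=0\varphi(v)$ and $\varphi(v1)=\varphi(v)\,01$. Together with the first identity, this already gives $\theta\circ\varphi = \mathrm{id}$ on $\bw_{0,1}$, so $\varphi$ is injective on $\bw_{0,1}$ and $\theta\colon\bw^0_{0,1}\to\bw_{0,1}$ is surjective.

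For $\varphi(\theta(u)) = u$ on $\bw^0_{0,1}$, I use strong induction on $|u|$. Decompose $u = 0^{\alpha+1}\,1\,v$ uniquely with $\alpha\ge 0$. The type-$0$ condition forces $v = \epsilon$ or $v$ to begin with $0$; since $u$ is balanced and ends with $1$, the tail $v$ is either empty or again lies in $\bw^0_{0,1}$ with $|v|<|u|$. By the induction hypothesis $\varphi(\theta(v)) = v$, and the fourth defining rule of $\theta$ gives $\theta(u) = 0^\alpha\,1\,\theta(v)$; applying the morphism $\varphi$ we obtain $\varphi(\theta(u)) = 0^\alpha\cdot 01\cdot v = 0^{\alpha+1}\,1\,v = u$. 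Combined with the previous paragraph, this proves that $\theta$ and $\varphi$ are mutually inverse bijections between $\bw^0_{0,1}$ and $\bw_{0,1}$.

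The main obstacle is the first induction: $\theta$ does not commute with prepending a $0$ (for example $\theta(01) = 1$ whereas $0\cdot\theta(1) = 01$), so when $a = 0$ one must first inspect the leading run of $0$s in $\varphi(w'1)$ before applying the correct defining rule of $\theta$. A secondary point is to check that $\theta(u)\in\bw_{0,1}$, in particular that $\theta(u)$ is balanced; this can be derived from Lemma~\ref{lem::Lot2002}(2) applied to a Sturmian extension of $u$, or read off structurally from the fact that $\theta$ simply erases one~$0$ from each maximal run of~$0$s in~$u$ and produces the unique $\varphi$-preimage of~$u$.
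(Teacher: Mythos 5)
Your proof is correct, and its core coincides with the paper's: the identity $\varphi(\theta(u))=u$ on $\bw^0_{0,1}$ is obtained in both cases from the decomposition $u=0^{\alpha+1}1v$, the rule $\theta(0^{\alpha+1}1v)=0^{\alpha}1\theta(v)$, and induction (the paper inducts on $|u|_1$, you on $|u|$ --- the same argument). Where you genuinely add something is the other composition: you prove $\theta(\varphi(w1))=w1$ for \emph{all} $w$ by a separate induction on $|w|$, handling the non-commutation of $\theta$ with a prepended $0$ via the leading run of $0$'s in $\varphi(w'1)$; the paper states this second bullet but its displayed proof never establishes it (it is nevertheless used later, in the surjectivity part of Lemma~\ref{lem::bijections}), so your version is the more complete one. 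You also make explicit the well-definedness checks: $\varphi(\bw_{0,1})\subseteq\bw^0_{0,1}$ via Corollary~\ref{cor::finite_balance} is fine, while the fact that $\theta(u)$ is balanced --- which is genuinely needed to conclude that $\theta$ lands in $\bw_{0,1}$ and that $\varphi$ is onto $\bw^0_{0,1}$ --- you only sketch, pointing to Lemma~\ref{lem::Lot2002}(2) applied to a Sturmian extension of $u$. That sketch is exactly the content of the paper's Lemma~\ref{lem::theta0bal}, which the paper proves \emph{after} this lemma using only the identity $\varphi\circ\theta=\mathrm{id}$ (so there is no circularity); your alternative one-line justification (``$\theta$ erases one $0$ per maximal run'') would not by itself prove balance, so keep the Sturmian-extension route. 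In short: same key mechanism as the paper, with the second bullet actually proved and the bookkeeping about which set is mapped where made explicit rather than implicit.
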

\begin{proof}
By induction on $|w|_1$.
\begin{enumerate}
\item If $|w|_1=0$ then $w=0^\alpha$ for some $\alpha \geq 0$ and we have $\varphi (
  \theta(0 0^\alpha 1)) = \varphi(0^\alpha 1) = 0^{\alpha+1}1=0w1$. 
\item Assume $|w|_1\geq 1$ and the result holds for all $u$ such that $|u|_1<|w|_1$. We
  have $w=0^\alpha 1 w'$ for some $\alpha \geq 0$. By 
  assumption, the letter $1$ is 
  isolated in $0w1$, so that $w' \neq \epsilon$ and $w'$ starts with the letter
  $0$. Hence, 
\begin{equation*}
\varphi ( \theta(0 w 1)) = \varphi ( \theta(0^{\alpha+1} 1w'1)) =   \varphi (
0^{\alpha} 1 \theta(w'1)) = 0^\alpha 0 1 \varphi (  \theta(w'1)). 
\end{equation*}
By the induction hypothesis, we obtain 
\begin{equation*}
\varphi ( \theta(0 w 1)) =0^{\alpha+1}1w'1 = 0w1.
\end{equation*}
\end{enumerate}
\end{proof}
\begin{example}
We have by straightforward computations:
$\varphi(\theta(11))=0101.$ Thus the last equation of Lemma \ref{lem::lem2} is not true everywhere.
\end{example}
\begin{lemma}\label{lem::theta0bal}
Let $w \in \{0,1\}^*$. If $w$ is balanced then so is $\theta(w)$.
\end{lemma}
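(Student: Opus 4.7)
The plan is to reduce to Lemma~\ref{lem::lem2} by enlarging $w$ into a word of $\bw^0_{0,1}$ without disturbing the deletion pattern that defines $\theta(w)$.

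I would first dispose of the easy cases. If $w$ contains only one letter, then $\theta(w)\in 0^*\cup 1^*$ is trivially balanced. Otherwise, being balanced, $w$ cannot contain both $00$ and $11$ (which would already break balance at length two), so $w$ is of type $0$ or of type $1$. In the type-$1$ case, every maximal $0$-run of $w$ has length exactly $1$, so $\theta$ deletes every $0$ and $\theta(w)\in 1^*$ is balanced.

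The substantive case is $w$ balanced of type $0$ containing both letters. Using the fact (implicit in Corollary~\ref{cor::finite_balance}) that every such $w$ is a factor $s[a..b]$ of a bi-infinite balanced type-$0$ sequence $s$ with both letters, I would extend inside $s$ to $w^*=s[a'..b']$ so that $w^*\in\bw^0_{0,1}$: set $a'=a$ if $w$ already starts with $0$, otherwise push $a'$ leftwards in $s$ to any position carrying a $0$, and symmetrically on the right (set $b'=b$ if $w$ ends with $1$, else push $b'$ rightwards to the next $1$ of $s$). Such positions exist since, in a bi-infinite balanced sequence carrying both letters, each letter recurs infinitely often on both sides. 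By Lemma~\ref{lem::lem2}, $\theta(w^*)\in\bw_{0,1}\subset\bw$ is then balanced.

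The step I expect to require the most care is showing that $\theta(w)$ occurs as a factor of $\theta(w^*)$, so that balance transfers. The positions of $w^*$ deleted by $\theta$ are the first $0$'s of each maximal $0$-run of $w^*$. For any position $p$ strictly inside $w$, this condition coincides with being a first $0$ of a $0$-run in $w$, since $w^*_{p-1}=w_{p-1}$ and the boundary clause "$p=a'$" does not apply. The only potential discrepancy is at $p=a$, where being a first $0$ in $w^*$ (when $a>a'$) further requires $w^*_{a-1}=1$. But the left-extension rule was designed precisely to kill this case: whenever $w_a=0$ we arrange $a=a'$, and otherwise $w_a=1$ is not a $0$ at all. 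Hence the two deletion sets agree on $[a,b]$, the restriction of $\theta(w^*)$ to the $w$-range equals $\theta(w)$, and $\theta(w)$ sits as a factor of the balanced word $\theta(w^*)$, so it is balanced.
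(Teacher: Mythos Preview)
Your strategy---enlarge $w$ to a word on which $\theta$ is well understood, then recover $\theta(w)$ as a factor---is the same as the paper's. The paper extends $w$ on the right to a one-sided Sturmian sequence $0^\alpha w\,0^\beta 1\,u$, uses Lemma~\ref{lem::Lot2002}(2) together with the identity $\varphi(\theta(0^\alpha w 0^\beta 1))=0^\alpha w 0^\beta 1$ from Lemma~\ref{lem::lem2} to conclude that $\theta(0^\alpha w 0^\beta 1)u'$ is Sturmian, and then checks by induction on $|w|_1$ that $\theta(0^\alpha w 0^\beta 1)=0^{\alpha'}\theta(w)\,0^{\beta'}1$.

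The gap in your version is the appeal to Lemma~\ref{lem::lem2} for ``$\theta(w^*)\in\bw$''. What the \emph{proof} of Lemma~\ref{lem::lem2} establishes is only the algebraic identity $\varphi(\theta(0w1))=0w1$ (and $\theta(\varphi(w1))=w1$); it never shows that $\theta(0w1)$ is balanced. Upgrading the identity to the stated bijection onto balanced words requires ``$\varphi(v)$ balanced $\Rightarrow v$ balanced'' for finite $v$, and the only tool the paper provides for that is the passage through infinite Sturmian sequences via Lemma~\ref{lem::Lot2002}---precisely the content of the lemma you are proving. So citing the full bijection of Lemma~\ref{lem::lem2} here is circular. (As a side note, $\theta(01v)=1\theta(v)$ shows the image is $\bw_{\epsilon,1}$, not $\bw_{0,1}$, so that target set is in any case misstated.) Your deletion-tracking argument that $\theta(w)$ is a factor of $\theta(w^*)$ is correct and the left-extension rule is exactly what is needed to align the deleted positions; but the balance of $\theta(w^*)$ still has to be earned through Lemma~\ref{lem::Lot2002}, as the paper does. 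One minor point: the embedding of $w$ in a bi-infinite balanced type-$0$ sequence is not ``implicit in Corollary~\ref{cor::finite_balance}''; it comes from the fact, recalled just after Lemma~\ref{lem::Lot2002}, that every balanced word is a factor of a Sturmian sequence.
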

\begin{proof}
\begin{enumerate}
\item If $w$ is of type $1$ (i.e. the letter $0$ is isolated in $w$), then we verify that
  $\theta(w)=1^\alpha$ for some integer $\alpha$. Hence it is balanced.
\item Assume now that $w \in \bw_0$. 

\begin{itemize}
\item 
There exist $\alpha \in \{0,1\}$, $\beta \in \N$ and a Sturmian
 sequence $u$ of type $0$ such that the sequence $0^\alpha w 0^\beta 1 u$
  is Sturmian and starts with the letter $0$. Notice that $u$ starts
  with the letter $0$ too.
\item By point 1 of Lemma~\ref{lem::Lot2002}, there exists a Sturmian sequence $u'$ such that $u=\varphi(u')$. 
\item We have 
\begin{equation*}
\begin{array}{llll}
\varphi(\theta(0^\alpha w 0^\beta 1)u') & = & \varphi(\theta(0^\alpha w 0^\beta 1))\varphi(u') 
   & \quad \text{ since $\varphi$ is a morphism} \\
& = & 0^\alpha w 0^\beta 1 u & \quad \text{ by Lemma~\ref{lem::lem2}}.
\end{array}
\end{equation*}
and by point 2 of Lemma~\ref{lem::Lot2002}, $\theta(0^\alpha w 0^\beta 1)u'$ is
Sturmian because $0^\alpha w 0^\beta 1 u$ is Sturmian. Hence
$\theta(0^\alpha w 0^\beta 1)$ is balanced as a factor of a balanced word. Finally, we prove by
induction on $|w|_1$ that $\theta(0^\alpha w0^\beta1)= 0^{\alpha'} \theta(w) 0^{\beta'}1$
for some integers $\alpha'$ and $\beta'$, so that $\theta(w)$ is balanced as a factor of
a balanced word.
\end{itemize}
\end{enumerate}
\end{proof}

The last technical property of $\theta$ we need is:
\begin{lemma}\label{lem::bijections}~
\begin{enumerate}
\item\label{lem::bijections::s00}
  If $L\geq 2h+1$ then $\theta$ is a bijection from $\bw_{0,0}(L,h)$ to
  $\bw_{\epsilon,\epsilon}(L-(h+1),h).$ 
  
  \item\label{lem::bijections::s01}
  If $L\geq 2h$ then $\theta$ is a bijection from ${\bw_{0,1}(L,h)}$ to
  $\bw_{\epsilon,1}(L-h,h)$ and from
  ${\bw_{1,0}(L,h)}$ to $\bw_{1,\epsilon}(L-h,h)$.

\item\label{lem::bijections::s11}
  If $L\geq 2h-1$ then $\theta$ is a bijection from ${\bw_{1,1}(L,h)}$ to
  $\bw_{1,1}(L-(h-1),h).$ 
\end{enumerate}
\end{lemma}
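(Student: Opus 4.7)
For each of the three cases, the plan is to construct an explicit two-sided inverse $\psi$ to $\theta$ and verify that it lands in the source set. First I would show that, under each hypothesis, every word $w$ in the source is of type $0$ (contains no factor $11$). Indeed, if a balanced $w$ contained $11$, balancedness would force every length-$2$ window of $w$ to contain at least one $1$, so $w$ would also have no factor $00$; the $L-h$ zeros of $w$ would then all be isolated, and counting positions (taking the prefix/suffix constraint into account) would yield $L\leq 2h+1$ in the $\bw_{0,0}$ case, $L\leq 2h$ in the $\bw_{0,1}$ and $\bw_{1,0}$ cases, and $L\leq 2h-1$ in the $\bw_{1,1}$ case, contradicting the hypothesis (the equality case forces the alternating word, which has no $11$).

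Using that $w$ is of type $0$, I would then write $w = 0^{a_0} 1 0^{a_1} 1\cdots 1 0^{a_k}$ in its unique canonical decomposition, where the interior exponents are $\geq 1$ and the boundary exponents are forced by the prefix/suffix. A short induction on $\sum a_i$ then gives that $\theta(w)$ is obtained by decrementing each strictly positive $a_i$ by $1$: the length of $\theta(w)$ drops by the number of maximal $0$-runs of $w$ (namely $h+1$, $h$, or $h-1$ in parts 1, 2 and 3 respectively), the height is preserved, and the prefix/suffix of $\theta(w)$ matches the claimed target. Combined with Lemma~\ref{lem::theta0bal}, this shows $\theta$ maps the source into the announced target.

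For the inverse, I would define $\psi$ on the target by writing $v$ in the analogous canonical form and incrementing each relevant exponent by $1$. The identities $\theta\circ\psi=\mathrm{id}$ and $\psi\circ\theta=\mathrm{id}$ are immediate from the formulas, so the only non-trivial check is balancedness of $\psi(v)$. A direct computation would identify $\psi(v) = \varphi(v)$ in the first bijection of part 2 and $\psi(v)$ with $\varphi(v)$ stripped of its leading $0$ in part 3; the second bijection of part 2 reduces to the first by the reversal involution (using that $\theta$ commutes with reversal on maximal $0$-runs). In each of these three sub-cases, balancedness of $\psi(v)$ would follow from Corollary~\ref{cor::finite_balance} together with closure of balancedness under factors and reversal.

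The main obstacle is part 1, where $\psi(v) = \varphi(v)\cdot 0$: one cannot conclude balancedness simply from $\varphi(v)$ being balanced, since appending a letter can destroy balance. To handle this, I would extend $v$ to a Sturmian sequence $s = p v q$ with $q$ infinite (possible since every finite balanced word is a factor of some Sturmian sequence), invoke Lemma~\ref{lem::Lot2002} to conclude that $\varphi(s)$ is Sturmian, and observe that both $\varphi(0)$ and $\varphi(1)$ start with $0$, so that the first letter of $\varphi(q)$ is $0$. Then $\varphi(v)\cdot 0$ appears as a factor of $\varphi(s)$ and is therefore balanced, closing the argument.
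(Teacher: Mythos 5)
Your proof is correct, and while it rests on the same skeleton as the paper's argument (first observing that under each length hypothesis the source words are of type $0$, then using Lemma~\ref{lem::theta0bal} to see that $\theta$ lands in the target, and finally inverting $\theta$ by means of $\varphi$ possibly with a $0$ appended or a leading $0$ removed), it differs in the bookkeeping and in one substantive point. Where the paper proves injectivity by induction on $h$ and surjectivity separately, you describe $\theta$ explicitly as decrementing every maximal $0$-run, which yields a two-sided inverse $\psi$ in one stroke; and where the paper treats $\bw_{1,0}(L,h)$ by the direct construction $w'=w''0$ with $\varphi(w)=0w''$, you reduce it to the $\bw_{0,1}$ case by reversal, which is legitimate since $\theta$ commutes with reversal (this is exactly the paper's later Corollary~\ref{cor::theta0tilde}) and balance is reversal-invariant. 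Most importantly, you correctly single out the one point that is not a formality: in the $\bw_{0,0}$ case the candidate preimage is $\varphi(v)0$, and balance of $\varphi(v)$ (Corollary~\ref{cor::finite_balance}) does not by itself give balance of $\varphi(v)0$. The paper simply asserts $w'=\varphi(w)0\in\bw_{0,0}(L,h)$ and leaves the verification to the reader; your argument --- embed $v$ in an infinite Sturmian sequence, apply Lemma~\ref{lem::Lot2002} to get a Sturmian image under $\varphi$, and note that the image of the infinite tail begins with $0$, so $\varphi(v)0$ occurs as a factor --- supplies precisely this missing detail (and the same device would also justify the paper's $w''0$ construction, which your reversal trick sidesteps). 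So your route is slightly more explicit and self-contained than the paper's, at the cost of setting up the run-calculus description of $\theta$, which the paper obtains implicitly through its inductions.
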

\begin{proof}
If $h=0$ and $L\neq 0$, $\bw_{0,0}(L,h) = \{0^L\}$ and $\bw_{\epsilon,\epsilon}(L-(h+1),h) =
\{0^{L-1}\} = \{\theta(0^L)\}$. All others sets are empty so that the result obviously
holds.  In the rest of the proof, we assume $h\geq1$. We prove the result for
$\bw_{0,1}(L,h)$. The proof of other cases is similar and left to the reader. 

Notice first that $\bw_{0,1}(L,h) \subset \bw^0$ iff $L\geq 2h$. Indeed, if $L=2h$, then
$\bw_{0,1}(L,h)  = \{(01)^h\} \subset \bw^0$. Now, if 
$L> 2h$, by the pigeonhole principle, any $w\in\bw_{0,1}(L,h)$ must contain the
subword $00$. Since $w$ is balanced, it cannot contain the subword $11$ hence the letter 1
is isolated. Conversely, if the letter 1 is isolated in $w$, then $w$ must contain at least
$h$ 0's, hence $L\geq 2h$. 
\begin{itemize}
\item $\theta({\bw_{0,1}(L,h)}) \subset \bw_{\epsilon,1}(L-h,h)$

Let $w\in\bw_{0,1}(L,h)$. By an easy induction on $h$, we show that
$|\theta(w)| = L-h$ and $|\theta(w)|_1 = |w|_1 = h$. Furthermore, from
Lemma~\ref{lem::theta0bal}, 
$\theta(w)$ is balanced so that $\theta(w) 
\in \bw(L-h,h)$. Now from the definition of $\theta$, if $1$ is a suffix of $w$,
then it is also a suffix of $\theta(w)$ so that $\theta(w)
\in \bw_{\epsilon,1}(L-h,h)$.

\item $\theta:\bw_{0,1}(L,h)\rightarrow \bw_{\epsilon,1}(L-h,h)$ is
  injective.

Let $u,v \in \bw_{0,1}(L,h)$. We have $u = 0^{\alpha+1}1u'$, $v = 0^{\beta+1}1v'$ and 
\[
\theta(u) = \theta(v) \iff
0^{\alpha}1\theta(u') = 0^{\beta}1\theta(v') \iff \alpha = \beta \wedge \theta(u') =
\theta(v').
\]
Now, either $h=1$ and $u'=v'=\epsilon$ so that $u=v$ or $h>1$ and $u',v'\in
\bw_{0,1}(L-\alpha-1,h-1)$. We get the result by induction on $h$.

\item $\theta:\bw_{0,1}(L,h)\rightarrow \bw_{\epsilon,1}(L-h,h)$ is
  surjective.

Let $w\in \bw_{\epsilon,1}(L-h,h)$. We have $w' = \varphi(w) \in
\bw_{0,1}(L,h)$. Indeed, $w'$ is balanced because $w$ is,
$|w'| = |w|_0 + 2|w|_1 = L-2h+2h=L$ and $|w'|_1 = |w|_1 = h$ so that
$w' \in\bw(L,h)$. Since $1$ is a suffix of $w$, it is also a suffix of
$w'$ and from Lemma~\ref{lem::lem2}, $\theta(w') = w$.
\end{itemize}
 
As already said, the proof of other cases is similar. 
To prove that $\theta : \bw_{0,0}(L,h) \rightarrow \bw_{\epsilon,\epsilon}(L-(h+1),h)$
is surjective, we consider for each $w\in \bw_{\epsilon,\epsilon}(L-(h+1),h)$, $w' =
\varphi(w)0$. Then we have $w' \in\bw_{0,0}(L,h)$ and $\theta(w') = w$.  
For $\theta : \bw_{1,0}(L,h) \rightarrow \bw_{1,\epsilon}(L-h,h)$, we
consider for each $w\in \bw_{1,\epsilon}(L-h,h)$, $w' = w''0$ where $\varphi(w) = 0w''$. Finally, for $\theta : \bw_{1,1}(L,h)
\rightarrow \bw_{1,1}(L-(h-1),h)$, we consider for each $w \in \bw_{1,1}(L-(h-1),h)$, $w' = w''$ where $\varphi(w) = 0w''$.

\end{proof}

\begin{corollary}\label{cor::s11}
For all $L,h$ such that $2\leq h\leq L$, $s_{1,1}(L,h) = s_{1,1}(h+(L-h)\bmod(h-1),h)$.
\end{corollary}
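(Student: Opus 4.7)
The plan is to iterate part~\ref{lem::bijections::s11} of Lemma~\ref{lem::bijections}, which asserts that whenever $L \geq 2h-1$ the map $\theta$ is a bijection from $\bw_{1,1}(L,h)$ onto $\bw_{1,1}(L-(h-1),h)$, and therefore yields the recurrence
\[
s_{1,1}(L,h) = s_{1,1}(L-(h-1),h).
\]
Since $h \geq 2$, the decrement $h-1$ is positive, so each application strictly reduces the first argument while keeping $h$ fixed.

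The key bookkeeping step is to determine how many times this recurrence may be applied. Setting $L_k = L - k(h-1)$, the $j$-th application requires $L_{j-1} \geq 2h-1$, which rearranges to $j \leq (L-h)/(h-1)$. Hence the maximal number of legal iterations is $k^\star = \lfloor (L-h)/(h-1) \rfloor$. Writing $L - h = k^\star(h-1) + r$ with $0 \leq r < h-1$, we have $r = (L-h) \bmod (h-1)$, and after the $k^\star$ applications the first argument has become $L - k^\star(h-1) = h + r$, which is exactly the quantity appearing in the statement.

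The only corner case is $L < 2h-1$, in which no iteration is possible; but then $0 \leq L - h < h-1$ forces $(L-h)\bmod(h-1) = L - h$, so both sides of the identity are literally $s_{1,1}(L,h)$ and the claim is trivial. I do not anticipate any substantive obstacle: the entire content of the corollary is already packaged into part~\ref{lem::bijections::s11} of Lemma~\ref{lem::bijections}, and what remains is a mechanical unfolding of the recurrence together with a routine modular-arithmetic identification of the terminal value.
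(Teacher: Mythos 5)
Your proof is correct and follows the same route as the paper: the paper also obtains the identity by iterating case~\ref{lem::bijections::s11} of Lemma~\ref{lem::bijections}, phrased as an induction on $q=\left\lfloor\frac{L-h}{h-1}\right\rfloor$, which is exactly your $k^\star$. Your explicit bookkeeping of the number of legal iterations and of the terminal value $h+(L-h)\bmod(h-1)$, together with the trivial case $L<2h-1$, is precisely what that induction amounts to.
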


\begin{proof}
Follows from case~\ref{lem::bijections::s11} by induction on $q=\left\lfloor
  \frac{L-h}{h-1}\right\rfloor$. 

\end{proof}

\begin{lemma}\label{lemme::s00Lh}
For all $L,h$ such that $0 \leq h \leq L$, $s_{0,0}(L,h) = s(L-h-1,h)$ and $s_{1,1}(L,h) = s(h-1,L-1)$.
\end{lemma}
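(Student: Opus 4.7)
The plan is to establish the two equalities in tandem, using Lemma~\ref{lem::bijections}, Corollary~\ref{cor::s11}, and the complement symmetry $s_{\alpha,\beta}(L,h)=s_{\overline\alpha,\overline\beta}(L,L-h)$ recorded earlier in the paper. Before the main argument I would dispose of the boundary cases by inspection: for $h=0$ we have $\bw_{0,0}(L,0)=\{0^L\}$ if $L\geq 1$ and $\bw_{0,0}(0,0)=\emptyset$, matching $s(L-1,0)$ read off directly from its definition; for $h\in\{0,1\}$, $\bw_{1,1}(L,h)$ is empty except when $L=h=1$, again matching $s(0,L-1)$.

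The first substantive step is to prove $s_{0,0}(L,h)=s(L-h-1,h)$ in the ``generic'' regime $L\geq 2h+1$, $h\geq 1$. Here Lemma~\ref{lem::bijections}\ref{lem::bijections::s00} gives $\theta$ as a bijection $\bw_{0,0}(L,h)\to\bw(L-h-1,h)$, and the inequality $L-h-1\geq h$ makes the $\bmod\,L$ reduction in the definition of $s$ trivial, so $s(L-h-1,h)=\#\bw(L-h-1,h)$ and the equality follows immediately.

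I would then bootstrap from this case to the $s_{1,1}$ identity for $h\geq 2$. Corollary~\ref{cor::s11} collapses $s_{1,1}(L,h)$ to $s_{1,1}(h+r,h)$ where $r=(L-h)\bmod(h-1)\in\{0,\ldots,h-2\}$; the complement symmetry rewrites this as $s_{0,0}(h+r,r)$; and because $h+r\geq 2r+2$ the case just established applies, yielding $\#\bw(h-1,r)$. A short modular computation, exploiting $L-1\equiv r\pmod{h-1}$, identifies $\#\bw(h-1,r)$ with $s(h-1,L-1)$. Finally, the $s_{0,0}$ identity in the remaining range $h\leq L\leq 2h$ with $h\geq 1$ follows from $s_{0,0}(L,h)=s_{1,1}(L,L-h)$ (complement symmetry), the $s_{1,1}$ identity just proved, and the observation that $s(L-h-1,L-1)=s(L-h-1,h)$, which holds because $(L-1)-h=-(L-h-1)$ is divisible by $L-h-1$.

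The main obstacle will be matching the different inequality regimes in the three cases of Lemma~\ref{lem::bijections} with the $\bmod\,L$ reduction built into the definition of $s$: each bijection covers a different range of $L$ versus $h$, and the modular identities produced by iterating $\theta$ must be shown to align with those hard-coded in $s$. Once the case split is organised as above, each individual step is a one-line application of the already-proved results.
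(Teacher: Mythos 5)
Your proposal is correct and takes essentially the same route as the paper's proof: the case $L\geq 2h+1$ via case~\ref{lem::bijections::s00} of Lemma~\ref{lem::bijections}, the range $h\leq L\leq 2h$ via the complement symmetry $s_{\alpha,\beta}(L,h)=s_{\overline\alpha,\overline\beta}(L,L-h)$ combined with Corollary~\ref{cor::s11}, the degenerate cases by inspection, and the final identification through the $\bmod$ convention built into $s$. The only difference is organisational—you establish the $s_{1,1}$ identity before finishing the small-$L$ cases of $s_{0,0}$, whereas the paper completes $s_{0,0}$ in all cases first and then gets $s_{1,1}$ by a single complementation—but once unrolled the computations coincide.
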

\begin{proof}
We distinguish several cases.
\begin{itemize}
\item If $2h<L$, the result is an immediate consequence of
  case~\ref{lem::bijections::s00} of Lemma~\ref{lem::bijections}. 
\item If $h+1<L\leq 2h$ (which implies $h\geq2$), we have
\[
\begin{array}{lcll} 
s_{0,0}(L,h)
 &=& s_{1,1}(L,L-h) &\quad \mbox{by exchanging $0$'s and $1$'s}, \\
 &=& s_{1,1}(L-h+h\bmod(L-h-1),L-h) & \quad \mbox{by Corollary~\ref{cor::s11}},\\
 &=& s_{0,0}(L-h+h\bmod(L-h-1),h\bmod(L-h-1)) & \quad \mbox{by exchanging $0$'s and $1$'s}, \\
 &=& s(L-h-1,h\bmod(L-h-1)) & \quad \mbox{by case~\ref{lem::bijections::s00} of
   Lemma~\ref{lem::bijections}},\\ 
 &=& s(L-h-1,h) & \quad \mbox{by definition of $s(L,h)$}\\
\end{array}
\]

\item If $L=h+1$ we have $L-h-1=0$. Either $h=0$ and $L=1$ in which case we have
  $s_{0,0}(1,0) = \#\{0\} =   1 = \#\{\epsilon\} = s(0,0)$, or $h>0$ and we have $s_{0,0}(L,h) = 0
  = s(0,h)$. 
\item If $L=h$ we have $s_{0,0}(h,h) = 0 = s(-1,h)$.
\end{itemize}
Since $s_{1,1}(L,h)=s_{0,0}(L,L-h)$, we immediately obtain
$s_{1,1}(L,h)=s(h-1,L-h)$. Moreover $s(L,h)=s(L,h \bmod L)$, so that 
$s_{1,1}(L,h) = s(h-1,L-1)$. 

\end{proof}

We are now ready to prove the main theorem.\\

\begin{proof}[of Theorem~\ref{thm::thm1}]
The property holds for $L=0$. If $L>0$, then the following disjoint
union holds: 
$$\bw(L,h) = \bw_{0,0}(L,h) \uplus \bw_{0,1}(L,h) \uplus \bw_{1,0}(L,h) \uplus \bw_{1,1}(L,h),$$ 
and, consequently:
$$s(L,h) = s_{0,0}(L,h) + s_{0,1}(L,h) + s_{1,0}(L,h) + s_{1,1}(L,h).$$
From Lemmas~\ref{lem::bijections} and \ref{lemme::s00Lh}, it follows that
\begin{eqnarray*}
s(L,h) & = & s(L-h-1,h) + s_{\epsilon,1}(L-h,h) + s_{1,\epsilon}(L-h,h) + s(h-1,L-1) \\
       & = & s(L-h-1,h) + s(L-h,h) + s_{1,1}(L-h,h) \\ &&- s_{0,0}(L-h,h) + s(h-1,L-1) \\
       & = & s(L-h-1,h) + s(L-h,h) + s(h-1,L-2)\\&& - s(L-2h-1,h) + s(h-1,L-1).
\end{eqnarray*}
\end{proof}

\subsection{Remark}
To summarize, we can compute the formula for $s(L,h)$ for all integers $L,h \in \Z$. Indeed if $L$ is negative, then it is null. If $h=0$, then $S(L,0)=1$. If $L=0$ then $S(0,h)=1$. The other values can be computed with the statement of Theorem~\ref{thm::thm1} and the relation $s(L,h)=s(L, L-h)$ which is obtained by exchanging $0$'s and $1$'s.
Sample values of $s(L,h)$ are given in Table~\ref{tab::sLh}. The sum of elements in a row give the value of $s(L)$.
\begin{table}[h]
\begin{center}
\tabcolsep=1ex
\begin{tabular}{r@{~~}||*{11}r}
\multicolumn{1}{r||}{$L\backslash h\!$}     
    &  ~0 &  1 &  2 &  3 &  4 &  5 &  6 &  7 &  8 &  9 & 10 \\
 \hline
  0 &  1 &    &    &    &    &    &    &    &    &    &    \\
  1 &  1 &  1 &    &    &    &    &    &    &    &    &    \\
  2 &  1 &  2 &  1 &    &    &    &    &    &    &    &    \\
  3 &  1 &  3 &  3 &  1 &    &    &    &    &    &    &    \\
  4 &  1 &  4 &  4 &  4 &  1 &    &    &    &    &    &    \\
  5 &  1 &  5 &  6 &  6 &  5 &  1 &    &    &    &    &    \\
  6 &  1 &  6 &  8 &  6 &  8 &  6 &  1 &    &    &    &    \\
  7 &  1 &  7 & 11 &  8 &  8 & 11 &  7 &  1 &    &    &    \\
  8 &  1 &  8 & 13 & 12 &  8 & 12 & 13 &  8 &  1 &    &    \\
  9 &  1 &  9 & 17 & 13 & 12 & 12 & 13 & 17 &  9 &  1 &    \\
 10 &  1 & 10 & 20 & 16 & 16 & 10 & 16 & 16 & 20 & 10 &  1 \\
\end{tabular}
\end{center}
\caption{Sample values of $s(L,h)$ for $0\leq h\leq L\leq10$}
\label{tab::sLh}
\end{table}

\subsection{An explicit formula for $s(L,2)$}

Using the recurrence formula of Theorem~\ref{thm::thm1}, we can
deduce an explicit formula for some particular cases. Actually we are not able to give an explicit formula in all cases.
For instance, one has:
\begin{proposition}
Let $L\geq0$ be an integer. Then, one has:
\begin{equation*}
s(L,2) = \left\lfloor \dfrac{(L+1)^2+2}{6} \right\rfloor.
\end{equation*}
\end{proposition}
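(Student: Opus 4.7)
The plan is to prove the proposition by strong induction on $L$, showing that both sides of the identity satisfy the same univariate linear recurrence and agree on enough initial values. First I would specialise Theorem~\ref{thm::thm1} at $h = 2$: for every $L \geq 4$ it reads
\[
s(L,2) = s(L-3,2) + s(L-2,2) - s(L-5,2) + s(1, L-2) + s(1, L-1).
\]
Because $s(1, k) = \#\bw(1,\, k \bmod 1) = \#\bw(1,0) = 1$ for every $k \geq 0$, the last two summands collapse to $2$, yielding the simplified recurrence $s(L,2) = s(L-2,2) + s(L-3,2) - s(L-5,2) + 2$ for all $L \geq 4$. The initial values $s(0,2) = 0$, $s(1,2) = 1$, $s(2,2) = 1$, $s(3,2) = 3$ are obtained directly from the definition (and they match the claimed formula at $L = 0,1,2,3$), so it remains to show that $f(L) := \lfloor ((L+1)^2 + 2)/6 \rfloor$ obeys the same recurrence for $L \geq 4$.

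Setting $g(L) = (L+1)^2 + 2$, a short difference-of-squares calculation gives
\[
g(L) - g(L-2) - g(L-3) + g(L-5) = (L+1)^2 - (L-1)^2 - (L-2)^2 + (L-4)^2 = 12,
\]
so the polynomial $g$ already satisfies the target identity after dividing by $6$. The main obstacle that I anticipate is that this identity need not survive the floor: writing $r(L) = g(L) \bmod 6$, the desired equality $f(L) - f(L-2) - f(L-3) + f(L-5) = 2$ is equivalent to the residue condition $r(L) + r(L-5) = r(L-2) + r(L-3)$.

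Since $r(L)$ depends only on $L \bmod 6$, a single computation of $(r(0), r(1), \ldots, r(5)) = (3, 0, 5, 0, 3, 2)$ reduces this congruence to a finite table check, and each of the six residue classes of $L$ modulo $6$ indeed satisfies it. Combined with the initial values, strong induction on $L$ then yields $s(L,2) = f(L)$ for every $L \geq 0$, completing the argument. The rest of the proof is mechanical; only the residue check is a genuine sanity condition, since a slightly different closed form could easily satisfy the polynomial identity yet fail the floor compatibility.
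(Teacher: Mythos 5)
Your proof is correct and follows essentially the same strategy as the paper: specialise Theorem~\ref{thm::thm1} to $h=2$, observe $s(1,k)=1$, and induct, with a mod-$6$ residue check absorbing the floor. The difference lies in the middle step: the paper introduces the auxiliary difference sequence $u_L=s(L,2)-s(L-3,2)$, derives $u_{L+2}=u_L+2$, solves it as $u_L=L-1+(L\bmod 2)$, and then reconciles the resulting expression with the floor formula by a fractional-part computation modulo $6$; you instead verify directly that $f(L)=\lfloor((L+1)^2+2)/6\rfloor$ satisfies the same four-term recurrence, via the polynomial identity $g(L)-g(L-2)-g(L-3)+g(L-5)=12$ combined with the residue table for $g\bmod 6$ (your table $(3,0,5,0,3,2)$ and the six case checks are correct). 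Your route is slightly more streamlined since it avoids the auxiliary sequence, at the price of checking a four-term rather than a two-term congruence condition. One small boundary point to make explicit: at $L=4$ your recurrence reaches back to $s(-1,2)$ and $f(-1)$, which are not covered by your stated base cases $L\in\{0,1,2,3\}$; this is harmless because $s(-1,2)=0=f(-1)$ under the paper's extended definition of $s$ on $\Z^2$ (or one may simply check $L=4$ directly, $s(4,2)=4=\lfloor 27/6\rfloor$, which is exactly why the paper includes $L=4$ among its base cases), but as written the induction step at $L=4$ silently uses this fact.
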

\begin{proof} By induction on $L$.
One checks that the result holds for $L \in \{0,1,2,3,4\}$. Assume $L \geq 5$ and the
result holds for all nonnegative integers smaller than $L$. From Theorem~\ref{thm::thm1}, one deduces
$s(L,2) -s(L-3,2) = s(L-2,2)-s(L-5,2)+2$. 

For all $L\geq3$, let $u_L = s(L,2)-s(L-3,2)$. Then, $u_{L+2}=u_{L}+2$
and one obtains $u_L=L-1+(L \bmod 2)$. By induction, it follows: 
\begin{equation*}
s(L,2) = \left\lfloor \dfrac{(L-2)^2+2}{6} \right\rfloor + L-1 + (L \bmod 2)
 = \left\lfloor \dfrac{L^2+2L}{6} \right\rfloor + (L \bmod 2).
\end{equation*}
Finally, it suffices to check that:
\begin{equation*}
\left\lfloor \dfrac{L^2+2L}{6}+\dfrac{1}{2} \right\rfloor 
= \left\lfloor \dfrac{L^2+2L}{6} \right\rfloor + (L \bmod 2).
\end{equation*}
By considering the remainder of $L$ modulo $6$, we obtain that the
fractional part of $\frac{L^2+2L}{6}$ is strictly less than
$\frac{1}{2}$ if and only if $L$ is even. The result follows. 
\end{proof}

\subsection{Generating functions of $s(L,h)$}

A classical way to obtain an explicit formula of a given function consists in computing
its generating function. In this section, we exhibit for each $h\geq0$ the generating
function $\mathcal{S}_h(X)$ of $s(L,h)$, namely $\smash{\dsp \mathcal{S}_h(X) = \sum_{L\geq 0} s(L,h)X^L}$. Let us recall that, in that case, $s(L,h)=\dfrac{\mathcal{S}^{(L)}_h(0)}{L!}$, where $\mathcal{S}^{(L)}_h$ is the derivative \\

of order $L$ of $\mathcal{S}^{(L)}$.  

\begin{theorem}
One has: $\dsp\mathcal{S}_0(X) = \frac{1}{1-X}$, $\dsp\mathcal{S}_1(X) = \frac{X}{1-X^2}$
and for all $h\geq 2$,
$$\mathcal{S}_h(X)=\frac{\mathcal{F}_h(X)}{(1-X^{h-1})(1-X^h)(1-X^{h+1})},$$
where
$$\mathcal{F}_h(X)=(1-X^{h-1})(V_{2h,h}X^h-V_{h-1,h}X^{h+1}-X^{2h-1})+(1+X)B_h,$$
$$V_{n,h}=\displaystyle\sum_{L=0}^{n-1}s(L,h)X^L,$$
$$B_h=\displaystyle\sum_{r=0}^{h-2}s(h-1,r)X^{r+2h-1}.$$
\end{theorem}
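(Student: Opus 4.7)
The plan is to turn the recurrence of Theorem~\ref{thm::thm1} into a functional equation for $\mathcal{S}_h(X)$. The cases $h=0$ and $h=1$ I would treat directly from the definition: $s(L,0)=1$ for every $L\geq 0$ gives $\mathcal{S}_0(X)=1/(1-X)$, and $s(L,1)$ can be read off from $s(L,1)=\#\bw(L,1\bmod L)$. The real work is for $h\geq 2$, where the idea is to multiply the recurrence of Theorem~\ref{thm::thm1} by $X^L$ and sum over those $L$ for which the recurrence is valid, namely $L\geq 2h$.

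Concretely, I would write $\mathcal{S}_h(X)=V_{2h,h}(X)+\sum_{L\geq 2h}s(L,h)X^L$ and substitute Theorem~\ref{thm::thm1} into the tail. The three terms $s(L-h-1,h)$, $s(L-h,h)$, $-s(L-2h-1,h)$ yield, after shifting the summation index and using $s(-1,h)=0$, a contribution $(X^h+X^{h+1}-X^{2h+1})\mathcal{S}_h(X)$, corrected by boundary polynomials of the form $-X^{h+1}V_{h-1,h}(X)$ and $-X^h V_{h,h}(X)$. Moving the $\mathcal{S}_h$ multiples to the left-hand side produces the factor $1-X^h-X^{h+1}+X^{2h+1}=(1-X^h)(1-X^{h+1})$, which already accounts for two of the three denominator factors in the statement.

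The more delicate terms are $s(h-1,L-2)$ and $s(h-1,L-1)$. Here I would exploit that, since $h-1\geq 1$, the definition $s(h-1,k)=\#\bw(h-1,k\bmod(h-1))$ forces the sequence $k\mapsto s(h-1,k)$ to be periodic of period $h-1$; its generating function is therefore $P(X)/(1-X^{h-1})$ with $P(X)=\sum_{r=0}^{h-2}s(h-1,r)X^r$, and consequently $B_h=X^{2h-1}P(X)$. Summing each tail $\sum_{L\geq 2h}s(h-1,L-\delta)X^L$ for $\delta=1,2$ via this periodicity, I expect their combined contribution to be $(1+X)B_h/(1-X^{h-1})$, up to a boundary monomial of the form $-X^{2h-1}$; clearing this fraction at the end is what introduces the remaining denominator factor $1-X^{h-1}$.

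The final step is to solve for $\mathcal{S}_h(X)$, multiply through by $(1-X^{h-1})$, and regroup everything over the common denominator $(1-X^{h-1})(1-X^h)(1-X^{h+1})$ to recover the stated $\mathcal{F}_h(X)$. The main obstacle I anticipate is algebraic bookkeeping rather than conceptual: the various boundary polynomials $V_{h-1,h}$, $V_{h,h}$, $V_{2h,h}$ and the leftover monomial $X^{2h-1}$ must be assembled into the compact form appearing in the statement, which is facilitated by the relation $V_{h,h}=V_{h-1,h}+s(h-1,h)X^{h-1}$ together with the elementary evaluation $s(h-1,h)=\#\bw(h-1,1)=h-1$. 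No new ideas beyond Theorem~\ref{thm::thm1} are needed; the rest is arithmetic with formal power series.
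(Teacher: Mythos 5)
Your plan is essentially the paper's own proof: split the series at $L=2h$, substitute the recurrence of Theorem~\ref{thm::thm1} into the tail, shift indices so that the three $s(\cdot,h)$ terms give $(X^h+X^{h+1}-X^{2h+1})\mathcal{S}_h(X)$ minus the boundary polynomials $X^{h+1}V_{h-1,h}$ and $X^hV_{h,h}$, use the periodicity $s(h-1,L)=s(h-1,L\bmod(h-1))$ to rewrite the last two terms as $(1+X)B_h/(1-X^{h-1})-X^{2h-1}$, and then solve using $1-X^h-X^{h+1}+X^{2h+1}=(1-X^h)(1-X^{h+1})$ and clear the factor $1-X^{h-1}$. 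One caveat, which concerns the printed statement rather than your method: this computation yields $\mathcal{F}_h(X)=(1-X^{h-1})\bigl(V_{2h,h}-V_{h,h}X^h-V_{h-1,h}X^{h+1}-X^{2h-1}\bigr)+(1+X)B_h$, and no amount of bookkeeping via $V_{h,h}=V_{h-1,h}+(h-1)X^{h-1}$ can turn this into the displayed term $V_{2h,h}X^h$ (for $h=2$ the displayed formula gives $X^3+2X^4+2X^5-3X^6$ instead of the correct $X+X^3$, and its degree exceeds the bound $3h-2$ asserted in the paper's own proof), so your final assembly step should target this corrected form; similarly, a direct computation for $h=1$ gives $\mathcal{S}_1(X)=X/(1-X)^2$ rather than $X/(1-X^2)$.
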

\begin{proof}
We have immediately the two first formulas.
From the previous recurrence, for $h\geq2$, we get:
\begin{eqnarray*}
\mathcal{S}_h(X) 
&=& \sum_{L=0}^{2h-1}s(L,h)X^L + \sum_{L\geq 2h}\begin{array}[t]{@{}l@{}}\bigl(s(L-h-1,h)+s(L-h,h)-s(L-2h-1,h)\\
                                                                      \qquad+s(h-1,L-1)+s(h-1,L-2)\bigr)X^L
                                             \end{array}\\
&&\mbox{by using the recurrence of Th.~\ref{thm::thm1}}.\\
&=& \sum_{L=0}^{2h-1}s(L,h)X^L + X^{h+1}\sum_{L\geq h-1}s(L,h)X^L+X^h\sum_{L\geq  h}s(L,h)X^L\\ &&\qquad
  -X^{2h+1}\sum_{L\geq -1}s(L,h)X^L+\sum_{L\geq 2h}\bigl(s(h-1,L-1)+s(h-1,L-2)\bigr)X^L\\
&=& \sum_{L=0}^{2h-1}s(L,h)X^L + X^{h+1}\left(\mathcal{S}_h(X)-\sum_{L=0}^{h-2}s(L,h)X^L\right) +\\&& X^h\left(\mathcal{S}_h(X)-\sum_{L=0}^{h-1}s(L,h)X^L\right)\\ &&\qquad 
  - X^{2h+1} \mathcal{S}_h(X) + (X+X^2)\sum_{L\geq 2h-2}s(h-1,L)X^{L} - s(h-1,0)X^{2h-1}\\
&=& (X^h + X^{h+1} - X^{2h+1}) \mathcal{S}_h(X)+\sum_{L=0}^{2h-1}s(L,h)X^L -X^{h+1}\sum_{L=0}^{h-2}s(L,h)X^L\\ &&\qquad  -X^h\sum_{L=0}^{h-1}s(L,h)X^L
  + (X+X^2)\sum_{q\geq2}\sum_{r=0}^{h-2}s(h-1,r)X^{q\,(h-1)+r} - X^{2h-1}\\
&&\mbox{by setting $L = q\times (h-1)+r$}.\\
&=& (X^h + X^{h+1} - X^{2h+1}) \mathcal{S}_h(X)+\sum_{L=0}^{2h-1}s(L,h)X^L - X^{h+1}\sum_{L=0}^{h-2}s(L,h)X^L - \\ &&\qquad X^{h}\sum_{L=0}^{h-1}s(L,h)X^L
  + (1+X) \dfrac{X^{2h-1}}{1-X^{h-1}} \sum_{r=0}^{h-2}s(h-1,r)X^{r} - X^{2h-1}.\\
\end{eqnarray*}
Finally, we get the formula
\begin{eqnarray*}
  \mathcal{S}_h(X) 
  &=& \dfrac{\mathcal{F}_h(X)}{(1-X^{h-1})(1-X^{h})(1 - X^{h+1})}
\end{eqnarray*}
where $\mathcal{F}_h\in\Z[X]$ and $\deg(\mathcal{F}_h) \leq 3h-2$.
\end{proof}

Notice that the previous equality does provide a closed formula for $\mathcal{S}_h(X)$
although it still depends on $s(L,h)$ because each sum is finite. Sample values of
$\mathcal{S}_h(X)$ are given in Table~\ref{tab::sgen-func}.

\begin{table}[h]
\begin{eqnarray*}
\mathcal{S}_2(X) &=& \dfrac{X+X^3}{(1-X)(1-X^2)(1-X^3)}\\
\mathcal{S}_3(X) &=& \dfrac{X+2X^2+X^4+2X^5}{(1-X^2)(1-X^3)(1-X^4)}\\
\mathcal{S}_4(X) &=& \dfrac{X+X^2+3X^3+3X^5+3X^6+3X^7}{(1-X^3)(1-X^4)(1-X^5)}\\
\mathcal{S}_5(X) &=& \dfrac{X+2X^2+3X^3+4X^4+3X^6+5X^7+3X^8+4X^9+X^{12}}{(1-X^4)(1-X^5)(1-X^6)}\\
\mathcal{S}_6(X) &=& \dfrac{X+X^2+X^3+4X^4+5X^5+5X^7+10X^8+7X^9+6X^{10}+5X^{11}+X^{14}}{(1-X^5)(1-X^6)(1-X^7)}\\
\end{eqnarray*}
\caption{Sample values of $\mathcal{S}_h(X)$}
\label{tab::sgen-func}
\end{table}

\subsection{Asymptotic behaviour of $s(L,h)$}

Using the generating functions we just computed, we may deduce an expression of $s(L,h)$
which highlights its asymptotic behaviour when $L$ grows.

We prove the following theorem:
\begin{theorem}\label{th::sgen-quadratic}
For all $h\geq2$, there exist $u_0,\ldots,u_{h-2},v_0,\ldots,v_{h-1},w_0,\ldots,w_{h} \in
\Q$ such that
\[
  \forall L\geq 0,\quad s(L,h)= \alpha\,L^2 + \beta\,L + u_{L\bmod(h-1)}+v_{L\bmod h}+w_{L\bmod(h+1)}
\]
with
$\dsp\alpha = \dfrac{1}{h\,(h^2-1)}\sum_{i=1}^{h-1}(h-i)\varphi(i)$
and
$\dsp\beta = \dfrac{1}{h\,(h+1)}\sum_{i=1}^{h}\varphi(i)$ where $\varphi$ is Euler's
totient function.
\end{theorem}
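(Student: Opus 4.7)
My plan is to read off the asymptotic form of $s(L,h)$ from the rational generating function $\mathcal{S}_h(X) = \mathcal{F}_h(X)/D_h(X)$ with $D_h(X) = (1-X^{h-1})(1-X^h)(1-X^{h+1})$ by a partial fraction decomposition. Since $(1-X)^3$ divides $D_h(X)$, the principal part at $X=1$ has order at most three and contributes a polynomial $\alpha L^2 + \beta L + \gamma$ to the coefficient of $X^L$ in $\mathcal{S}_h(X)$, while the remaining poles lie at non-trivial roots of unity and contribute periodic sequences in $L$.

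The non-trivial poles must then be shown to split into three families of periods $h-1$, $h$, and $h+1$. Since $\gcd(h-1,h) = \gcd(h,h+1) = 1$, the only non-trivial root of unity common to two factors of $D_h$ is $X=-1$, which appears when $h$ is odd, in both $1-X^{h-1}$ and $1-X^{h+1}$. For such $h$ I check that both summands in the formula for $\mathcal{F}_h(X)$ vanish at $X=-1$ (the first because $(1-X^{h-1})$ does, the second because $(1+X)$ does), so $(1+X)\mid\mathcal{F}_h(X)$ and the reduced generating function has only a simple pole at $-1$. The non-principal part then decomposes as
\[
\frac{\widetilde{U}(X)}{1+X+\cdots+X^{h-2}} + \frac{\widetilde{V}(X)}{1+X+\cdots+X^{h-1}} + \frac{\widetilde{W}(X)}{1+X+\cdots+X^{h}},
\]
whose coefficient sequences have periods $h-1$, $h$, and $h+1$; absorbing $\gamma$ into one of them yields the stated form, with the tuples $(u_r),(v_r),(w_r)$ read off directly.

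To determine $\alpha$, I expand around $X=1$. Setting $t=1-X$, I have $D_h(X) = (h-1)h(h+1)\,t^3(1+O(t))$, so $\alpha = \tfrac{1}{2}\lim_{X\to 1}(1-X)^3 \mathcal{S}_h(X) = \mathcal{F}_h(1)/[2(h-1)h(h+1)]$. The factor $(1-X^{h-1})$ kills the first summand of $\mathcal{F}_h$ at $X=1$, leaving $\mathcal{F}_h(1) = 2\,B_h(1) = 2\sum_{r=0}^{h-2} s(h-1,r)$. Combined with $s(h-1,h-1)=1$, $\sum_{r=0}^{h-1}s(h-1,r)=s(h-1)$, and Lipski's formula, this gives $\mathcal{F}_h(1) = 2\sum_{i=1}^{h-1}(h-i)\varphi(i)$, matching the claimed $\alpha$.

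For $\beta$ I extend the Laurent expansion one step further: $D_h(X) = (h-1)h(h+1)\,t^3\bigl(1 - \tfrac{3(h-1)}{2}t + O(t^2)\bigr)$ and $\mathcal{F}_h(X) = \mathcal{F}_h(1) - \mathcal{F}_h'(1)\,t + O(t^2)$, so the coefficient of $(1-X)^{-2}$ is $B = \bigl(\tfrac{3(h-1)}{2}\mathcal{F}_h(1) - \mathcal{F}_h'(1)\bigr)/[(h-1)h(h+1)]$, giving $\beta = 3\alpha + B = \bigl(\tfrac{3h}{2}\mathcal{F}_h(1) - \mathcal{F}_h'(1)\bigr)/[(h-1)h(h+1)]$. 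Computing $\mathcal{F}_h'(1)$ from its explicit definition reduces to finite sums of values of $s$, namely $V_{2h,h}(1), V_{h-1,h}(1), B_h(1), B_h'(1)$. The main obstacle will be the algebraic simplification that turns this combination into $\tfrac{1}{h(h+1)}\sum_{i=1}^h\varphi(i)$: it requires re-expressing $\sum_{r=0}^{h-2} r\,s(h-1,r)$ and the partial sums $V_{2h,h}(1)-V_{h-1,h}(1)$ as totient sums, and telescoping via Lipski's identity applied to both $s(h-1)$ and $s(h)$. The cancellation at $X=-1$ proved above is essential throughout, as otherwise a spurious $(-1)^L L$ contribution would prevent the coefficient of $L$ from being a constant.
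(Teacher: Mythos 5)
Your overall route is the same as the paper's: partial fraction decomposition of $\mathcal{S}_h(X)=\mathcal{F}_h(X)/\bigl((1-X^{h-1})(1-X^h)(1-X^{h+1})\bigr)$, with the triple pole at $X=1$ producing the quadratic part and the remaining cyclotomic factors producing the three periodic corrections; your treatment of the odd-$h$ case via $\mathcal{F}_h(-1)=0$, your value $\alpha=\mathcal{F}_h(1)/(2h(h^2-1))$ with $\mathcal{F}_h(1)=2(s(h-1)-1)$, and your expression $\beta=\bigl(\tfrac{3h}{2}\mathcal{F}_h(1)-\mathcal{F}_h'(1)\bigr)/(h(h^2-1))$ all coincide with the paper's computation.

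The gap is precisely at the point you flag as ``the main obstacle'': the evaluation of $\mathcal{F}_h'(1)$. Expanding $\mathcal{F}_h'(1)$ leaves you with $\sum_{r=0}^{h-2} r\,s(h-1,r)$ and with $\sum_{L=h}^{2h-1}s(L,h)-\sum_{L=0}^{h-1}s(L,h)$ (equivalently your $V_{2h,h}(1),V_{h-1,h}(1)$), and neither of these is handled by ``re-expressing as totient sums and telescoping via Lipski's identity.'' The individual partial sums $\sum_{L<2h}s(L,h)$ have no closed form, so Lipski's formula, which only concerns $s(L)=\sum_h s(L,h)$, cannot evaluate them; it enters only at the very end, to rewrite $s(h-1)-1$ and $s(h)-s(h-1)$ as totient sums. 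What is actually needed, and what the paper supplies, are two identities: first, $\sum_{r=0}^{h-2} r\,s(h-1,r)=\tfrac{h-1}{2}\bigl(s(h-1)-2\bigr)$, proved from the symmetry $s(L,r)=s(L,L-r)$ (Lemma~\ref{lem::rshr}); second, and this is the substantive missing idea, $\sum_{L=h}^{2h-1}s(L,h)-\sum_{L=0}^{h-1}s(L,h)=s(h)+s(h-1)-(h+1)$ (Lemma~\ref{lem::diff-sLh}), which is obtained by applying the recurrence of Theorem~\ref{thm::thm1} to each $s(L+h,h)$, $0\leq L\leq h-1$, reducing the resulting terms with $s(L,h)=s(L,h\bmod L)$, and telescoping --- i.e.\ the telescoping is driven by the recurrence for $s$, not by Lipski's identity. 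Without these two identities your formula for $\beta$ remains an unevaluated combination of partial sums, so the claimed value $\beta=\tfrac{1}{h(h+1)}\sum_{i=1}^h\varphi(i)$ is not yet established; the existence statement and the value of $\alpha$, however, are complete and correct as you argue them.
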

Before proving this theorem, we need some preliminary results.

\begin{lemma}\label{lem::sgen-partfrac}
For all $h\geq2$, there exist $R,A,B,C\in\Q[X]$ such that $\deg(R)<3$, $\deg(A)<h-1$,
$\deg(B)<h$, $\deg(C)<h+1$ and
\begin{equation}
\mathcal{S}_h(X) = \dfrac{R(X)}{(1-X)^3}+\dfrac{A(X)}{1-X^{h-1}}+\dfrac{B(X)}{1-X^h}+\dfrac{C(X)}{1-X^{h+1}}.
\label{eqn::sgen-partfrac}
\end{equation}
\end{lemma}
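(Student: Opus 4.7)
The plan is to carry out a standard partial fraction decomposition and repackage it to match the form of the claim. Writing $Q_k(X) := (1-X^k)/(1-X) = 1 + X + \cdots + X^{k-1}$, the denominator factors as $D(X) := (1-X^{h-1})(1-X^h)(1-X^{h+1}) = (1-X)^3 Q_{h-1}(X) Q_h(X) Q_{h+1}(X)$, with $(1-X)$ coprime to each $Q_k$ since $Q_k(1) = k \neq 0$.

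When $h$ is even, the four factors $(1-X)^3, Q_{h-1}, Q_h, Q_{h+1}$ are pairwise coprime in $\Q[X]$ (the non-trivial verification being $\gcd(h-1, h+1) = 1$), and classical partial fraction decomposition produces unique $R, A', B', C'$ with $\deg R < 3$, $\deg A' < h-2$, $\deg B' < h-1$, $\deg C' < h$ such that
\[
\mathcal{S}_h(X) = \frac{R(X)}{(1-X)^3} + \frac{A'(X)}{Q_{h-1}(X)} + \frac{B'(X)}{Q_h(X)} + \frac{C'(X)}{Q_{h+1}(X)}.
\]
Setting $A := A'(1-X)$, $B := B'(1-X)$, $C := C'(1-X)$ turns $A'/Q_{h-1}$ into $A/[(1-X)Q_{h-1}] = A/(1-X^{h-1})$, and likewise for the other two, yielding the claim with degree bounds $\deg A \leq h-2 < h-1$, $\deg B \leq h-1 < h$, and $\deg C \leq h < h+1$.

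When $h$ is odd, the factor $1+X$ appears in both $Q_{h-1}$ and $Q_{h+1}$, so I would use the refined pairwise-coprime factorization $D = (1-X)^3 (1+X)^2 \widetilde Q_{h-1} Q_h \widetilde Q_{h+1}$ with $\widetilde Q_k := Q_k/(1+X)$. Standard partial fractions now contain additional terms $\widetilde S/(1+X)^2 + S/(1+X)$. The key input which rescues the claim is the identity $\mathcal{F}_h(-1) = 0$: inspecting the formula of the previous theorem, the summand $(1-X^{h-1})(\cdots)$ vanishes at $X=-1$ because $h-1$ is even for $h$ odd, and $(1+X)B_h$ vanishes trivially. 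This forces $\widetilde S = 0$. The remaining simple pole $S/(1+X)$ can then be split between the targets $A/(1-X^{h-1})$ and $C/(1-X^{h+1})$, each of whose denominators carries the factor $1+X$ for $h$ odd: the residue condition amounts to the single linear equation $A(-1)/(h-1) + C(-1)/(h+1) = S$, which leaves one degree of freedom. The remaining pieces $A'/\widetilde Q_{h-1}$, $B'/Q_h$, $C'/\widetilde Q_{h+1}$ are absorbed into $A, B, C$ exactly as in the even case.

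I expect the main obstacle to be the odd-$h$ case: naive partial fractions produce a $1/(1+X)^2$ term that cannot be absorbed into any of the target denominators $1 - X^{h\pm 1}$ (each contains $1+X$ only to the first power), so existence of the decomposition relies essentially on the identity $\mathcal{F}_h(-1) = 0$, which is a non-trivial input specific to the algebraic form of $\mathcal{F}_h$ produced by the previous theorem rather than a consequence of generic partial-fraction theory. The even-$h$ case is a routine repackaging of the classical decomposition.
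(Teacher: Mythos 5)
Your proof is correct and follows essentially the same route as the paper: the same factorization of the denominator as $(1-X)^3$ times the pairwise coprime blocks $\frac{1-X^k}{1-X}$, the same repackaging $A=(1-X)A'$, etc., and, crucially, the same use of $\mathcal{F}_h(-1)=0$ for odd $h$. The only (cosmetic) difference is that the paper cancels the factor $1+X$ from $\mathcal{F}_h$ and the denominator before decomposing, whereas you decompose first and then use the vanishing to eliminate the $1/(1+X)^2$ term and absorb the leftover simple pole.
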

\begin{proof}
We have
\[
  \mathcal{S}_h(X) = \dfrac{\mathcal{F}_h(X)}{(1-X^{h-1})(1-X^{h})(1 - X^{h+1})} 
  = \dfrac{\mathcal{F}_h(X)}{(1-X)^3\,\dfrac{1-X^{h-1}}{1-X}\,\dfrac{1-X^{h}}{1-X}\,\dfrac{1 - X^{h+1}}{1-X}}
\]
where $\mathcal{F}_h\in\Z[X]$ and $\deg(\mathcal{F}_h) \leq 3h-2$. 

If $h$ is even, $(1-X)^3$, $\frac{1-X^{h-1}}{1-X}$, $\frac{1-X^{h}}{1-X}$ and 
$\frac{1 - X^{h+1}}{1-X}$ are pairwise coprime so that  
\[
  \mathcal{S}_h = \dfrac{R(X)}{(1-X)^3} + \dfrac{A_0(X)}{\,\dfrac{1-X^{h-1}}{1-X}\,} +
  \dfrac{B_0(X)}{\,\frac{1-X^h}{1-X}\,} + \dfrac{C_0(X)}{\,\dfrac{1-X^{h+1}}{1-X}\,}
\]
for some $R,A_0,B_0,C_0\in \Q[X]$ with $\deg(R)<3$, $\deg(A_0)<h-2$,
$\deg(B_0)<h-1$ and $\deg(C_0)<h$. The result follows with $A(X)=(1-X)A_0(X)$,
$B(X)=(1-X)B_0(X)$ and $C(X)=(1-X)C_0(X)$.

If $h$ is odd, $\frac{1-X^{h-1}}{1-X}$ and $\frac{1-X^{h+1}}{1-X}$ are divisible by
$1+X$. But in this case, we notice that $\mathcal{F}_h(-1)=0$ so that $\mathcal{F}_h(X)$ is also divisible by
$1+X$ and we may write
\[
 \mathcal{S}_h(X)
 = \dfrac{\dfrac{\mathcal{F}_h(X)}{1+X}}{(1-X)^3\,\dfrac{1-X^{h-1}}{1-X}\,\dfrac{1-X^{h}}{1-X}\,\dfrac{1-X^{h+1}}{1-X^2}}
\]
where $(1-X)^3$, $\frac{1-X^{h-1}}{1-X}$, $\frac{1-X^{h}}{1-X}$ and
$\frac{1-X^{h+1}}{1-X^2}$ are pairwise coprime. Thus we get
\[
 \mathcal{S}_h = \dfrac{R(X)}{(1-X)^3} + \dfrac{A_0(X)}{\,\dfrac{1-X^{h-1}}{1-X}\,} +
 \dfrac{B_0(X)}{\,\dfrac{1-X^h}{1-X}\,} + \dfrac{C_0(X)}{\,\dfrac{1-X^{h+1}}{1-X^2}\,}
\]
for some $R,A_0,B_0,C_0\in \Q[X]$ with $\deg(R)<3$, $\deg(A_0)<h-2$,
$\deg(B_0)<h-1$ and $\deg(C_0)<h-1$. The result follows with $A(X)=(1-X)A_0(X)$,
$B(X)=(1-X)B_0(X)$ and $C(X)=(1-X^2)C_0(X)$.

\end{proof}

\begin{lemma}\label{lem::rshr}
We recall that $s(L)$ is the number of balanced words of length $L$, 
and that it is equal to $\displaystyle\sum_{h}s(L,h)$.
For all $L\geq0$ we have,
\[
  \sum_{h=0}^{L-1}s(L,h)\,h = \dfrac{L}{2}(s(L)-2).
\]
\end{lemma}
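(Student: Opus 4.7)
The plan is to exploit the symmetry $s(L,h)=s(L,L-h)$, which holds because swapping $0$'s and $1$'s in a balanced word of length $L$ is an involution exchanging words of height $h$ with those of height $L-h$ (this is already observed in the paper just after the definition of $s_{\alpha,\beta}(L,h)$).

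First I would extend the sum to run from $h=0$ to $h=L$ and introduce
\[
T \;=\; \sum_{h=0}^{L} s(L,h)\,h.
\]
Since $s(L,L)$ counts balanced words of length $L$ and height $L$, the only such word is $1^L$, hence $s(L,L)=1$. Consequently
\[
\sum_{h=0}^{L-1} s(L,h)\,h \;=\; T - L\cdot s(L,L) \;=\; T - L.
\]

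Next I would compute $T$ by reindexing. Using the involution $h\mapsto L-h$ and the identity $s(L,h)=s(L,L-h)$,
\[
T \;=\; \sum_{h=0}^{L} s(L,L-h)\,h \;=\; \sum_{h=0}^{L} s(L,h)\,(L-h) \;=\; L\sum_{h=0}^{L} s(L,h) - T \;=\; L\cdot s(L) - T,
\]
so $2T = L\cdot s(L)$ and $T = \tfrac{L}{2}\,s(L)$.

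Combining the two displays gives
\[
\sum_{h=0}^{L-1} s(L,h)\,h \;=\; \frac{L}{2}\,s(L) - L \;=\; \frac{L}{2}\bigl(s(L)-2\bigr),
\]
as required. There is essentially no obstacle here beyond checking that the boundary contribution at $h=L$ is exactly $L$; the whole argument rests on the $0\leftrightarrow 1$ symmetry and does not use the recurrence of Theorem~\ref{thm::thm1} at all.
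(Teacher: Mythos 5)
Your proof is correct and is essentially the paper's own argument: both define the full sum $\sum_{h=0}^{L}s(L,h)\,h$, use the $0\leftrightarrow1$ symmetry $s(L,h)=s(L,L-h)$ to show it equals $\tfrac{L}{2}s(L)$, and then subtract the boundary term $L\cdot s(L,L)=L$ (which you make explicit and the paper leaves implicit).
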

\begin{proof}
Let $\dsp Z_L = \sum_{h=0}^{L}s(L,h)\,h$. Then

\[
  Z_L= \sum_{u=0}^{L}s(L,L-u)(L-u)  \quad \mbox{by setting $h= L-u$}\]
  
  $$Z_L= \sum_{u=0}^{L}s(L,u)(L-u) \mbox{because $s(L,L-u) = s(L,u)$ for all $L,u \in\Z$}$$
 $$ Z_L= \vphantom{\sum_{u=0}^{L}}L\,s(L)-Z_L$$

Hence $ Z_L = \frac{L}{2}s(L)$ and
$ \sum_{h=0}^{L-1}s(L,h)\,h = Z_L - L = \frac{L}{2}(s(L)-2).$

\end{proof}

\begin{lemma}\label{lem::diff-sLh}
For all $h\geq1$,
\[
  \sum_{L=h}^{2h-1}s(L,h) - \sum_{L=0}^{h-1}s(L,h) = s(h)+s(h-1)-(h+1)
\]
\end{lemma}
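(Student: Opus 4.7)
The plan is to expand $\sum_{L=h}^{2h-1} s(L,h)$ using Theorem~\ref{thm::thm1} and reduce the desired identity to a termwise congruence of the form $h-1-j\equiv h-1\pmod j$. The case $h=1$ is immediate: $s(1,1)-s(0,1)=1$ and $s(1)+s(0)-2=2+1-2=1$, so from now on I assume $h\geq 2$.

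First, I would use the $0$-$1$ symmetry $s(L,h)=s(L,L-h)$ (valid for $0\leq h\leq L$) and the substitution $k=L-h$ to rewrite
\[
\sum_{L=h}^{2h-1} s(L,h) = \sum_{k=0}^{h-1} s(h+k,k).
\]
For every $k\in\{0,\ldots,h-1\}$ the inequality $k\leq(h+k)/2$ holds, so Theorem~\ref{thm::thm1} applies with length $h+k$ and height $k$, yielding
\[
s(h+k,k) = s(h-1,k) + s(h,k) - s(h-k-1,k) + s(k-1,h+k-2) + s(k-1,h+k-1).
\]

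I would then sum this identity over $k$ and evaluate the five resulting pieces. Using $\sum_{k=0}^{m}\#\bw(m,k)=s(m)$ and $\#\bw(h,h)=1$, the first two pieces contribute $s(h-1)$ and $s(h)-1$. For the last two, the terms at $k=0$ and $k=1$ vanish, and for $k\geq 2$ the congruences $h+k-2\equiv h-1\pmod{k-1}$ and $h+k-1\equiv h\pmod{k-1}$, combined with the definition $s(L,m)=\#\bw(L,m\bmod L)$, reduce them to $\sum_{j=1}^{h-2}s(j,h-1)$ and $\sum_{j=1}^{h-2}s(j,h)$, respectively. Since $s(0,h)=0$ and $s(h-1,h)=\#\bw(h-1,1)=h-1$, the latter equals $\sum_{L=0}^{h-1}s(L,h)-(h-1)$, which is precisely what is needed to cancel one copy of $\sum_{L=0}^{h-1}s(L,h)$ in the target difference.

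Putting everything together, the identity to prove collapses to
\[
\sum_{k=0}^{h-1} s(h-k-1,k) = \sum_{j=1}^{h-2} s(j,h-1) + 1.
\]
Reindexing $j=h-k-1$ and peeling off the boundary terms $s(0,h-1)=0$ and $s(h-1,0)=1$, this becomes the termwise assertion $s(j,h-1-j)=s(j,h-1)$ for $1\leq j\leq h-2$. Because $h-1-j\equiv h-1\pmod j$ and both integers are non-negative, both sides evaluate to $\#\bw(j,(h-1)\bmod j)$, which closes the argument.

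The main obstacle is the bookkeeping of the five sums so that the isolated boundary contributions at $k=0,1$ and at the endpoints of each reindexed sum line up precisely to produce the constant $-(h+1)$ on the right-hand side. Once the algebra is organized so that Sum~5 cancels one copy of $\sum_{L=0}^{h-1}s(L,h)$ and Sum~4 matches Sum~3 up to the single term $s(h-1,0)=1$, the only nontrivial combinatorial input beyond Theorem~\ref{thm::thm1} is the elementary congruence displayed above.
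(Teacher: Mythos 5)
Your proof is correct and follows essentially the same route as the paper's: both expand $\sum_{L=h}^{2h-1}s(L,h)=\sum_{k=0}^{h-1}s(h+k,k)$ via Theorem~\ref{thm::thm1}, reduce the resulting terms with $s(L,h)=s(L,h\bmod L)$, and perform the same cancellations (your final termwise congruence $s(j,h-1-j)=s(j,h-1)$ is exactly the paper's upfront mod-reduction of the term $-s(h-k-1,k)$, which it then handles by telescoping). The only differences are presentational: you make the $0$--$1$ symmetry explicit and evaluate the boundary contributions directly rather than by telescoping sums.
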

\begin{proof}
\begin{eqnarray*}
  \sum_{L=h}^{2h-1}s(L,h) - \sum_{L=0}^{h-1}s(L,h)\span\span \\
  &=& \sum_{L=0}^{h-1}s(L+h,h) - \sum_{L=0}^{h-1}s(L,h) \\
   &=& \sum_{L=0}^{h-1}(s(h,L)+s(h-1,L)-s(h-L-1,L)+s(L-1,L+h-1) \\
&&+s(L-1,L+h-2)) - \sum_{L=0}^{h-1}s(L,h)\\ &&\mbox{by using the recurrence relation from Th.~\ref{thm::thm1}}\\
  &=& \sum_{L=0}^{h-1}(s(h,L)+s(h-1,L)-s(h-1-L,h-1)\\&&+s(L-1,h)+s(L-1,h-1)) - \sum_{L=0}^{h-1}s(L,h) \\
&&\mbox{by using the relation $s(L,h) = s(L,h\bmod L)$ on 3rd, 4th and 5th terms}\\
 &=& s(h)-1+s(h-1)-\left(\sum_{u=0}^{h-1}s(u,h-1)-\sum_{L=0}^{h-1}s(L-1,h-1)\right)
  \\
&& - \left(\sum_{L=0}^{h-1}s(L,h)-\sum_{L=0}^{h-1}s(L-1,h)\right) \mbox{by setting $u = h-1-L$}\\
  &=& s(h)+s(h-1)- (h+1). \\
\end{eqnarray*}

\end{proof}

We are now ready to prove the main theorem of this section.\\

\begin{proof}[Proof of Theorem~\ref{th::sgen-quadratic}]
We first prove existence. In Equation~\ref{eqn::sgen-partfrac}, we write

  $$  R(X) = r_0+r_1\,(1-X)+r_2\,(1-X)^2.$$
  $$  A(X) = \sum_{k=0}^{h-2} a_k X^k, \quad
    B(X) = \sum_{k=0}^{h-1} b_k X^k, \quad
    C(X) = \sum_{k=0}^{h} c_k X^k.
  $$
Thus
\begin{eqnarray*}
\mathcal{S}_h(X) &=& \dfrac{r_0}{(1-X)^3}+\dfrac{r_1}{(1-X)^2}+\dfrac{r_2}{1-X}
+\dfrac{A(X)}{1-X^{h-1}}+\dfrac{B(X)}{1-X^h}+\dfrac{C(X)}{1-X^{h+1}}.
\end{eqnarray*}
The Taylor expansions of $\dfrac{1}{(1-X)^3},\dfrac{1}{(1-X)^2},\dfrac{1}{1-X}$ give
the series expansion of $\mathcal{S}_h(X)$ :
\begin{eqnarray*}
  \mathcal{S}_h(X) 
  &=& \sum_{L\geq0}r_0\,\dfrac{(L+1)(L+2)}{2}\,X^L+\sum_{L\geq0}r_1\,(L+1)\,X^L+\sum_{L\geq0}r_2\,X^L \\
  &&\qquad\qquad + \sum_{n\geq0} A(X)\,X^{n\,(h-1)}  + \sum_{n\geq0} B(X)\,X^{n\,h}  + \sum_{n\geq0} C(X)X^{n\,(h+1)}\\
  &=& \sum_{L\geq0}\left(r_0\,\dfrac{(L+1)(L+2)}{2}+r_1\,(L+1)+r_2\right)X^L \\
  &&\qquad\qquad
     + \sum_{n\geq0} \sum_{k=0}^{h-2} a_k X^{n\,(h-1)+k}
     + \sum_{n\geq0} \sum_{k=0}^{h-1} b_k X^{n\,h+k}
     + \sum_{n\geq0} \sum_{k=0}^{h} c_k X^{n\,(h+1)+k}\\
  &=& \sum_{L\geq0}\left(\dfrac{r_0}{2}\,L^2 + \left(\dfrac{3}{2}\,r_0+r_1\right)\,L + r_0 + r_1 + r_2 + a_{L\bmod(h-1)}+b_{L\bmod
      h}+c_{L\bmod (h+1)}\right)X^L. 
\end{eqnarray*}
We get the result with $\alpha = \frac{1}{2}\,r_0$, $\beta=\frac{3}{2}\,r_0+r_1$, $u_i = a_i+r_0+r_1+r_2$, $v_i = b_i$
and $w_i = c_i$.

From the Taylor series of $(1-X)^3\,\mathcal{S}_h(X)$ at $X=1$, we get
$r_0=\frac{\mathcal{F}_h(1)}{h\,(h^2-1)}$ and $r_1 = \frac{\frac{3}{2}(h-1)\,\mathcal{F}_h(1)-\mathcal{F}_h'(1)}{h\,(h^2-1)}$. 
We have
$\mathcal{F}_h(1) = 2 \sum_{r=0}^{h-2}s(h-1,r) = 2(s(h-1)-1),$
and by Lemmas~\ref{lem::rshr} and~\ref{lem::diff-sLh},
\begin{eqnarray*}
  \mathcal{F}_h'(1) &=& -(h-1)\left(\sum_{L=h}^{2\,h-1}s(L,h) - \sum _{L=0}^{h-1}s(L,h)
    + (h-2)\right) + 2\,\sum _{r=0}^{h-2}s (h-1,r) r \\&&+ (4\,h-1) \sum _{r=0}^{h-2}s(h-1,r)\\
  &=& -(h-1)(s(h)+s(h-1)-(h+1) + (h-2)) \\ &&+2\,\dfrac{h-1}{2}(s(h-1)-2)+(4h-1)(s(h-1)-1)\\
  &=& -(h-1)(s(h)-1) +(4h-1)(s(h-1)-1).\\
\end{eqnarray*}
Thus we get finally 
\begin{eqnarray*}
  \alpha &=& \dfrac{\mathcal{F}_h(1)}{2\,h\,(h^2-1)} = \dfrac{s(h-1)-1}{h\,(h^2-1)} = \dfrac{1}{h\,(h^2-1)}\sum_{i=1}^{h-1}{(h-i)\varphi(i)}\\
  \beta &=& \dfrac{\dfrac{3}{2}\,h\,\mathcal{F}_h(1)-\mathcal{F}_h'(1)}{h\,(h^2-1)} = \dfrac{s(h)-s(h-1)}{h\,(h+1)} =\dfrac{1}{h\,(h+1)} \sum_{i=1}^{h}{\varphi(i)}.\\
\end{eqnarray*}
\end{proof}

\section{Balanced palindromes and symmetrical discrete segments}

In the present section, we focus on the case of segments which are symmetrical with
respect to the point $(L/2,h/2)$. These segments are encoded by
balanced palindromes. 

\begin{figure}
  \begin{center}
    \includegraphics{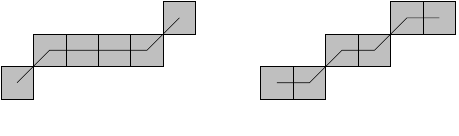}
  \end{center}
\caption{The two symmetrical segments of length 5 and height 2, and their respective
  encodings as balanced palindromes}
\end{figure}

\subsection{Recurrence formula}

This investigation is close to the general case by noticing that if $w$ is a palindrome,
then so is $\theta(w)$. We first need an additional property of the mapping $\theta$.

\begin{lemma}\label{lem::alttheta0}
For all $w \in \{0,1\}^*$ and all $\alpha \geq 0$, 
\begin{eqnarray*}
\theta(w1) &=& \theta(w)1\\
\theta(w10^{\alpha+1}) &=& \theta(w)10^\alpha
\end{eqnarray*}
\end{lemma}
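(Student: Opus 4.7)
The proof I would write is by simultaneous induction on the length of $w$ for both equalities, driven by the left-to-right recursive definition of $\theta$. The whole content of the lemma is that although $\theta$ consumes $w$ from the left, appending either a single $1$ or a block $10^{\alpha+1}$ on the right commutes with the recursion. So the natural strategy is to unfold the leftmost clause of the definition of $\theta$ on $w$, push the suffix inside, and then apply induction.

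Concretely, I would split on the leading pattern of $w$, exactly as the definition of $\theta$ does:

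\textbf{Case $w=\epsilon$.} Here $\theta(w1)=\theta(1)=1\theta(\epsilon)=1=\theta(w)\,1$ by the third clause, and $\theta(w10^{\alpha+1})=\theta(10^{\alpha+1})=1\theta(0^{\alpha+1})=10^{\alpha}=\theta(w)10^{\alpha}$ by the third and second clauses. No induction is needed.

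\textbf{Case $w=1v$.} By the third clause, $\theta(w1)=\theta(1v1)=1\theta(v1)$ and $\theta(w10^{\alpha+1})=1\theta(v10^{\alpha+1})$. Applying the induction hypothesis to $v$, which is strictly shorter, gives the two claims.

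\textbf{Case $w=0^{\beta+1}$ (no letter $1$).} Then $\theta(w1)=\theta(0^{\beta+1}1\epsilon)=0^{\beta}1\theta(\epsilon)=0^{\beta}1=\theta(w)\,1$ by the fourth clause, and $\theta(w10^{\alpha+1})=\theta(0^{\beta+1}10^{\alpha+1})=0^{\beta}1\theta(0^{\alpha+1})=0^{\beta}10^{\alpha}=\theta(w)10^{\alpha}$.

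\textbf{Case $w=0^{\beta+1}1v$.} The fourth clause of the definition gives $\theta(w1)=0^{\beta}1\,\theta(v1)$ and $\theta(w10^{\alpha+1})=0^{\beta}1\,\theta(v10^{\alpha+1})$. Since $v$ is strictly shorter than $w$, the induction hypothesis yields $\theta(v1)=\theta(v)1$ and $\theta(v10^{\alpha+1})=\theta(v)10^{\alpha}$, and the fourth clause applied to $\theta(w)$ recombines the prefix to give the required identity.

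The only thing to be careful about is to keep the two statements coupled inside a single induction, because in the final case we need exactly the second identity on the shorter word $v$ to conclude the second identity on $w$; splitting them would force a second induction. Otherwise the proof is a routine case check and there is no real obstacle.
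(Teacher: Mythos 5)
Your proof is correct and matches the paper's approach: the paper simply says ``easy induction on $|w|_1$'', and your case analysis following the leftmost clause of the definition of $\theta$ (inducting on $|w|$ rather than $|w|_1$, which is an immaterial difference since both measures drop when you pass from $1v$ or $0^{\beta+1}1v$ to $v$) is exactly the routine argument being omitted. The case split is exhaustive and each step, including keeping the two identities coupled in one induction, is sound.
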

\begin{proof}
Easy induction on $|w|_1$.
\end{proof}

\begin{corollary}\label{cor::theta0tilde}
For all $w \in \{0,1\}^*$, $\theta(\Tilde{w}) = \Tilde{(\theta(w))}$
\end{corollary}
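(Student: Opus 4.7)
The plan is to prove Corollary~\ref{cor::theta0tilde} by induction on the length of $w$, exploiting the symmetry between the four defining rules of $\theta$, which describe its action by consuming characters from the left, and the two identities of Lemma~\ref{lem::alttheta0}, which describe exactly the analogous behavior when consuming characters from the right. Under this viewpoint the corollary is essentially a formal consequence of the fact that $\theta$ acts the same way on both ends of its argument.

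First I would dispatch the base cases. When $w=\epsilon$, both sides reduce to $\epsilon$. When $w=0^n$ the word equals its own reverse, and $\theta(w)$ is either $\epsilon$ or $0^{n-1}$, which is again its own reverse, so the identity holds trivially. For the inductive step I would split on the last letter of $w$. If $w=u\,1$, then $\Tilde{w}=1\,\Tilde{u}$, so the rule $\theta(1v)=1\,\theta(v)$ gives $\theta(\Tilde{w})=1\,\theta(\Tilde{u})$; applying the induction hypothesis to $u$ and then the first identity of Lemma~\ref{lem::alttheta0} converts this into $\Tilde{(\theta(u)\,1)}=\Tilde{(\theta(u\,1))}=\Tilde{(\theta(w))}$. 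If instead $w$ ends with $0$ and contains at least one $1$, I would write $w=u\,1\,0^{\alpha+1}$ for some $\alpha\geq0$, so $\Tilde{w}=0^{\alpha+1}\,1\,\Tilde{u}$; the rule $\theta(0^{\alpha+1}1v)=0^\alpha\,1\,\theta(v)$ yields $\theta(\Tilde{w})=0^\alpha\,1\,\theta(\Tilde{u})$, and the induction hypothesis together with the second identity of Lemma~\ref{lem::alttheta0} packages this as $\Tilde{(\theta(u\,1\,0^{\alpha+1}))}=\Tilde{(\theta(w))}$.

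The only real point to check, and the one place where an error could creep in, is the precise matching between the four left-sided rules defining $\theta$ and the two right-sided identities of Lemma~\ref{lem::alttheta0}. In particular I would confirm that my case split is exhaustive (every nonempty $w$ with at least one $1$ either ends with $1$ or has the form $u\,1\,0^{\alpha+1}$) and that each inductive call is made on a strictly shorter word. Beyond this bookkeeping I do not expect any genuine combinatorial difficulty, since the commutation of $\theta$ with reversal is built into the symmetry of the two characterizations already in hand.
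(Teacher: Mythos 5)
Your proof is correct and is essentially the paper's own argument read in mirror image: the paper peels the leading letter(s) of $w$ (writing $w=1v$ or $w=0^{\alpha+1}1v$) and applies Lemma~\ref{lem::alttheta0} to $\Tilde{w}$, whereas you peel the trailing letter(s) of $w$ and apply the defining rules of $\theta$ to $\Tilde{w}$, with the same case split and the same two ingredients combined around the induction hypothesis. The switch of induction variable (length of $w$ instead of $|w|_1$) is immaterial, since each recursive call strictly decreases both.
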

\begin{proof}
By induction on $|w|_1$. If $|w|_1=0$ then $w = 0^\alpha$ for
some $\alpha \geq 0$ and the result obviously holds. Now assume that $|w|_1\geq 1$ and the
result holds for all $v$ such that $|v|_1<|w|_1$. Then
either $w=1v$ and
\[
\begin{array}{cccc}
\theta(\Tilde{w}) &=& \theta(\Tilde{(1v)}) \\
&=&\theta(\Tilde{v}1) &\\
&=& \theta(\Tilde{v})1 & \mbox{by Lemma~\ref{lem::alttheta0}}\\
&=& \Tilde{\theta(v)}1 & \mbox{by the induction hypothesis}\\
&=& \Tilde{(1\theta(v))} &\\
&=& \Tilde{\theta(1v)} & \mbox{by definition of $\theta$}\\
&=& \Tilde{\theta(w)}
\end{array}
\]
or $w=0^{\alpha+1}1v$ and
\[
\begin{array}{cccc}
\theta(\Tilde{w}) &=& \theta(\Tilde{(0^{\alpha+1}1v)}) \\
&=&\theta(\Tilde{v}10^{\alpha+1}) &\\
&=& \theta(\Tilde{v})10^\alpha & \mbox{by Lemma~\ref{lem::alttheta0}}\\
&=& \Tilde{\theta(v)}10^\alpha & \mbox{by the induction hypothesis}\\
&=& \Tilde{(0^\alpha1\theta(v))} &\\
&=& \Tilde{\theta(0^{\alpha+1}1v)} & \mbox{by definition of $\theta$}\\
&=& \Tilde{\theta(w)}.
\end{array}
\]
\end{proof}

\begin{corollary}\label{cor::theta0pal}
If $w \in\{0,1\}^*$ is a palindrome, then $\theta(w)$ is also a palindrome.
\end{corollary}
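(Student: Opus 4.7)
The plan is extremely short: the corollary is a one-line consequence of the preceding Corollary~\ref{cor::theta0tilde}. By definition, a word $w$ is a palindrome exactly when $w = \Tilde{w}$, so it suffices to apply $\theta$ to both sides of this equality and commute $\theta$ with the reversal operator.

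More concretely, given a palindrome $w \in \{0,1\}^*$, I would first observe $w = \Tilde{w}$, then apply $\theta$ to obtain $\theta(w) = \theta(\Tilde{w})$, and finally invoke Corollary~\ref{cor::theta0tilde} to rewrite the right-hand side as $\Tilde{\theta(w)}$. Chaining these equalities yields $\theta(w) = \Tilde{\theta(w)}$, which is exactly the statement that $\theta(w)$ is a palindrome.

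There is no real obstacle here: all the work has already been done in Lemma~\ref{lem::alttheta0} and Corollary~\ref{cor::theta0tilde}, where the compatibility of $\theta$ with right-appended suffixes of the form $1$ and $10^{\alpha+1}$ was established by induction on $|w|_1$, and then upgraded to full commutation with the reversal. So the proof is a direct substitution and requires no further case analysis or induction.
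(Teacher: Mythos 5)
Your proof is correct and follows exactly the paper's route: the paper also deduces the corollary immediately from Corollary~\ref{cor::theta0tilde}, and your write-up merely makes explicit the one-line substitution $\theta(w)=\theta(\Tilde{w})=\Tilde{\theta(w)}$.
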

\begin{proof}
Immediate consequence of Corollary~\ref{cor::theta0tilde}
\end{proof}

We denote by $\pal(L,h)$ the set of balanced palindromes of length $L$ and height $h$,
and, for $x\in\{0,1\}$, by $\pal_x(L,h)$ the set of balanced palindromes of length $L$ and
height $h$ the first (and last) letter of which is $x$. 
We define the function $p(L,h)$ on $\Z^2$ by:
\[
p(L,h) = \left\{\begin{array}[c]{l@{\qquad}l}
\# \pal(L, h \bmod L) & \mbox{if $L> 0$} \\
1 & \mbox{if $L=0$ and $h=0$} \\
0 & \mbox{if $L<0$ or $L=0$ and $h\neq0$}
\end{array}\right.
\]
and for $0\leq h \leq L$ and $x\in\{0,1\}$, we define $p_x(L,h) = \#\pal_x(L,h)$.
We have the following properties. 

\begin{lemma}\label{lemme:pal:theta}
Let $L,h \in \N$ such that $0\leq h \leq L$. 
\begin{enumerate}
\item\label{lemme:pal:theta:00} If $L \geq 2h+1$, then $\theta$ is a bijection from $\pal_0(L,h)$
  to $\pal(L-(h+1),h)$.
\item\label{lemme:pal:theta:11} If $L \geq 2h-1$, then $\theta$ is a bijection from
  $\pal_1(L,h)$ to $\pal_1(L-(h-1),h)$.
\end{enumerate}
\end{lemma}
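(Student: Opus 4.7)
The plan is to reduce both parts to Lemma~\ref{lem::bijections} (cases~\ref{lem::bijections::s00} and \ref{lem::bijections::s11}) combined with Corollaries~\ref{cor::theta0tilde} and \ref{cor::theta0pal}. The key observation is that every palindrome starting with a letter $x$ also ends with $x$, so $\pal_0(L,h) \subseteq \bw_{0,0}(L,h)$ and $\pal_1(L,h) \subseteq \bw_{1,1}(L,h)$. Under these inclusions, $\theta$ is automatically injective on $\pal_x(L,h)$, and its image consists of balanced words of length $L-(h+1)$, respectively $L-(h-1)$, and height $h$; by Corollary~\ref{cor::theta0pal} these images are moreover palindromes.

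For part~\ref{lemme:pal:theta:00}, the remarks above already give $\theta(\pal_0(L,h)) \subseteq \pal(L-(h+1),h)$ together with the injectivity of $\theta$ on $\pal_0(L,h)$. For surjectivity, take $w \in \pal(L-(h+1),h)$; by Lemma~\ref{lem::bijections} case~\ref{lem::bijections::s00} there is a unique $w' \in \bw_{0,0}(L,h)$ with $\theta(w')=w$. Reversing a word preserves both balancedness and the first/last letter, so $\widetilde{w'}$ also lies in $\bw_{0,0}(L,h)$, and Corollary~\ref{cor::theta0tilde} together with $\widetilde{w}=w$ yields
\[
\theta(\widetilde{w'}) \;=\; \widetilde{\theta(w')} \;=\; \widetilde{w} \;=\; w \;=\; \theta(w').
\]
The injectivity of $\theta$ on $\bw_{0,0}(L,h)$ then forces $\widetilde{w'}=w'$, so $w' \in \pal_0(L,h)$, proving surjectivity.

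Part~\ref{lemme:pal:theta:11} is proved by the same scheme, using case~\ref{lem::bijections::s11} of Lemma~\ref{lem::bijections} in place of case~\ref{lem::bijections::s00}. One additional sanity check is needed: the image actually lies in $\pal_1$ and not merely in $\pal$. This follows from the defining relation $\theta(1v)=1\theta(v)$ together with Lemma~\ref{lem::alttheta0}, whose first identity gives $\theta(u1)=\theta(u)1$; hence $\theta$ preserves the property of starting and ending with the letter $1$. I do not expect any substantive obstacle: the statement is essentially a formal consequence of the commutation $\theta \circ \widetilde{\ \cdot\ } = \widetilde{\ \cdot\ } \circ \theta$ from Corollary~\ref{cor::theta0tilde}, of the stability of $\bw_{x,x}$ under reversal, and of the bijectivity results already proved for $\theta$ in Lemma~\ref{lem::bijections}.
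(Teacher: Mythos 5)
Your proof is correct, and its treatment of surjectivity genuinely departs from the paper's. The forward half (image and injectivity) is the same in both: $\pal_x(L,h)\subseteq\bw_{x,x}(L,h)$, Lemma~\ref{lem::bijections} gives injectivity and the right length, height and prefix/suffix data, and Corollary~\ref{cor::theta0pal} makes the image palindromic (your extra check via $\theta(1v)=1\theta(v)$ and Lemma~\ref{lem::alttheta0} is redundant, since case~\ref{lem::bijections::s11} already lands in $\bw_{1,1}$). For surjectivity the paper writes down the preimage explicitly --- $w'=\varphi(w)0$ in case~\ref{lemme:pal:theta:00}, and $w'$ with $\varphi(w)=0w'$ in case~\ref{lemme:pal:theta:11} --- and then runs a separate induction on $|w|_1$ to show that this $w'$ is a palindrome. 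You avoid that induction entirely: since $\bw_{0,0}(L,h)$ and $\bw_{1,1}(L,h)$ are stable under reversal and $w$ is a palindrome, Corollary~\ref{cor::theta0tilde} gives $\theta(\widetilde{w'})=\widetilde{\theta(w')}=\widetilde{w}=w=\theta(w')$, and injectivity of $\theta$ on $\bw_{x,x}(L,h)$ forces $\widetilde{w'}=w'$, hence $w'\in\pal_x(L,h)$. This is a purely formal argument that exploits Corollary~\ref{cor::theta0tilde} directly, whereas the paper uses it only through Corollary~\ref{cor::theta0pal}; what you give up is the explicit description of the inverse map in terms of $\varphi$, which the paper's construction yields as a by-product, but nothing later in the paper depends on that. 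Both routes invoke the hypotheses $L\geq 2h+1$, resp.\ $L\geq 2h-1$, only through Lemma~\ref{lem::bijections}, so your reduction is sound.
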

\begin{proof}~

\begin{enumerate}
\item Since $\pal_0(L,h) \subset \bw_{0,0}(L,h)$, from Lemma~\ref{lem::bijections}, we
  already know that $\theta(\pal_0(L,h)) \subset \bw(L-(h+1),h)$ and
  from Corollary~\ref{cor::theta0pal}, $\theta(\pal_0(L,h)) \subset
  \pal(L-(h+1),h)$. Since $\theta$ is injective on $\bw_{0,0}(L,h)$, it is also
  injective on $\pal_0(L,h)$. We are left to prove that it is surjective.
  Let $w \in \pal(L-(h+1),h)$ and $w' = \varphi(w)0$. From Lemma~\ref{lem::bijections},
  we have $w' \in \bw_{0,0}(L,h)$ and $\theta(w') = w$. We prove by induction on $|w|_1$
  that $w'$ is a  palindrome. If $|w|_1 = 0$ then $w = 0^\alpha$ for some $\alpha\geq 0$ and
  $w' = 0^{\alpha+1}$ which is trivially a palindrome. If $|w|_1 = 1$ then $w =
  0^\alpha10^\alpha$ for some $\alpha\geq 0$ and $w' = 0^{\alpha+1}10^{\alpha+1}$ which is again trivially a
  palindrome. If $|w|_1 \geq 2$, assume that $\varphi(u)0$ is a palindrome
  for all $u$ such that $|u|_1 < |w|_1$. We have $w = 0^\alpha1v10^\alpha$ for some $\alpha \geq 0$ and
  some palindrome $v$ with $|v|_1 < |w|_1$. Then $w' =
  0^{\alpha+1}1\varphi(v)010^{\alpha+1}$ is a palindrome because $\varphi(v)0$
  is a palindrome by the induction hypothesis..
\item The proof is similar. We get in the same way that $\theta(\pal_1(L,h)) \subset
  \pal_1(L-(h-1),h)$ and $\theta$ is injective on $\pal_1(L,h)$. To prove that it is
  surjective, for each $w\in \pal_1(L-(h-1),h)$ we consider $w'$ such that $\varphi(w) =
  0w'$. From Lemma~\ref{lem::bijections} we have $\theta(w') = w$ and
  $w'\in\bw_{1,1}(L,h)$. We prove like
  above that $w'$ is a palindrome so that $w' \in \pal_1(L,h)$.
\end{enumerate}

\end{proof}

\begin{lemma}\label{lem::p0Lh}
For all $L,h\in \N$ such that $0\leq h\leq L$, $p_0(L,h) = p(L-h-1,h)$ and $p_1(L,h) = p(h-1,L-1)$
\end{lemma}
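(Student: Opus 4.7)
The plan is to follow the same template as the proof of Lemma~\ref{lemme::s00Lh}, replacing Lemma~\ref{lem::bijections} with Lemma~\ref{lemme:pal:theta} and leaning on Corollary~\ref{cor::theta0pal} to ensure that $\theta$ never destroys the palindrome property. The only genuinely new ingredient needed is a palindromic counterpart of Corollary~\ref{cor::s11}.

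First I would establish, for every $2\leq h\leq L$, the reduction formula
\[
p_1(L,h) = p_1\bigl(h + (L-h)\bmod(h-1),\,h\bigr),
\]
by induction on $\lfloor(L-h)/(h-1)\rfloor$, each step being an application of case~\ref{lemme:pal:theta:11} of Lemma~\ref{lemme:pal:theta}. This plays the role of Corollary~\ref{cor::s11} in the palindromic setting.

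Next I would prove $p_0(L,h)=p(L-h-1,h)$ by cases on $L$. When $L\geq 2h+1$, case~\ref{lemme:pal:theta:00} of Lemma~\ref{lemme:pal:theta} gives $p_0(L,h)=\#\pal(L-h-1,h)=p(L-h-1,h)$ (the symmetry $\#\pal(n,k)=\#\pal(n,n-k)$ absorbs the boundary $L=2h+1$, where $L-h-1=h$). When $h+1<L\leq 2h$, I would exchange $0$'s and $1$'s to write $p_0(L,h)=p_1(L,L-h)$, apply the reduction formula above to replace $L$ by $L':=(L-h)+h\bmod(L-h-1)$, exchange $0$'s and $1$'s a second time to land in $p_0\bigl(L',\,h\bmod(L-h-1)\bigr)$, check that $L'\geq 2\bigl(h\bmod(L-h-1)\bigr)+1$, and close the case by invoking the first subcase. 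The edge values $L=h+1$ and $L=h$ I would settle by direct inspection of $\pal_0(L,h)$, which is a singleton ($L=1$, $h=0$) or empty, matching $p(0,h)$ or $p(-1,h)$. Finally, to pass from the $p_0$ identity to the $p_1$ identity, I would exchange $0$'s and $1$'s once more to get $p_1(L,h)=p_0(L,L-h)=p(h-1,L-h)$, and conclude using the congruence $L-h\equiv L-1\pmod{h-1}$ together with the definition of $p$; the marginal cases $h\in\{0,1\}$ again reduce to direct inspection.

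The main obstacle is the bookkeeping of the modular reductions in the middle case $h+1<L\leq 2h$: one has to check that after each exchange and each iteration of the reduction the new pair of parameters falls exactly in the regime where the next lemma applies, and that the piecewise definition of $p$ at the reduced argument coincides with $p(L-h-1,h)$ on the nose. Everything else is a faithful transcription of the argument for $s_{0,0}$ and $s_{1,1}$, with Corollary~\ref{cor::theta0pal} guaranteeing that the palindrome property is preserved at every step.
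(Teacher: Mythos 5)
Your proposal is correct and follows exactly the route the paper intends: its proof of this lemma is simply ``Similar to the proof of Lemma~\ref{lemme::s00Lh}'', and your write-up spells out that similarity faithfully, including the needed palindromic analogue of Corollary~\ref{cor::s11} (via case~\ref{lemme:pal:theta:11} of Lemma~\ref{lemme:pal:theta}), the complementation symmetry $p_0(L,h)=p_1(L,L-h)$, and the boundary checks. Nothing essential is missing.
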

\begin{proof}
Similar to the proof of Lemma~\ref{lemme::s00Lh}.
\end{proof}

From Lemma~\ref{lem::p0Lh} and the definition of $p(L,h)$, we deduce the following
recurrence for $p(L,h)$. 
\begin{theorem}
Let $L,h \in \Z$,
\[
p(L,h) = \left\{
\begin{array}{ll}
0 & \mbox{if $L<0$ or ($L=0$ and $h\neq0$)}\\
1 & \mbox{if $L\geq0$ and ($h=0$ or $h=L$)}\\
p(L,h\bmod L) & \mbox{if $L>0$ and ($h<0$ or $h>L$)}\\
p(L-h-1, h) + p(h - 1, L-1) &\mbox{otherwise}
\end{array}
\right.
\]
\end{theorem}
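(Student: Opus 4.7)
The plan is to handle the four cases of the recurrence by reducing each to the definition of $p(L,h)$ extended on $\Z^2$ together with the counting lemmas already established. The first three cases are immediate from the definition stated just before the theorem: the first line is the explicit boundary value for negative $L$ or for $L=0$ with $h \neq 0$; the second line ($h=0$ or $h=L$) corresponds to the unique palindrome $0^L$ (resp.\ $1^L$); the third line is the very definition of $p(L,h)$ via the residue $h \bmod L$ whenever $L>0$ and $h$ falls outside $[0,L]$.

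For the main ``otherwise'' case, we have $L\geq 1$ and $0 < h < L$, so in particular $0 \leq h \leq L$ and the hypotheses of Lemma~\ref{lem::p0Lh} are satisfied. I would decompose $\pal(L,h)$ according to the first letter: since an element of $\pal(L,h)$ is a palindrome of length $L\geq 1$, it begins (and ends) with either $0$ or $1$, giving the disjoint union
\[
\pal(L,h) = \pal_0(L,h) \uplus \pal_1(L,h),
\]
and therefore $p(L,h) = p_0(L,h) + p_1(L,h)$.

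Then I would invoke Lemma~\ref{lem::p0Lh}, which gives $p_0(L,h) = p(L-h-1,h)$ and $p_1(L,h) = p(h-1,L-1)$. Summing these two equalities yields exactly
\[
p(L,h) = p(L-h-1,h) + p(h-1,L-1),
\]
which is the claimed recurrence.

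I do not anticipate any real obstacle here, since the genuine work has already been carried out in Lemma~\ref{lemme:pal:theta} (the bijections induced by $\theta$ on $\pal_0(L,h)$ and $\pal_1(L,h)$) and Lemma~\ref{lem::p0Lh} (translating those bijections into the cardinality identities). The only minor point to check is that in the ``otherwise'' branch of the theorem we really do have $0 \leq h \leq L$, which is clear once the four other conditions (those involving negative $L$, $L=0$, $h=0$, $h=L$, and $h\notin[0,L]$) have been excluded.
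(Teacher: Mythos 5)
Your proof is correct and follows exactly the route the paper intends: the paper derives this theorem directly from Lemma~\ref{lem::p0Lh} and the definition of $p(L,h)$, and your argument simply spells out the same deduction (boundary cases from the definition, then the split $\pal(L,h)=\pal_0(L,h)\uplus\pal_1(L,h)$ combined with $p_0(L,h)=p(L-h-1,h)$ and $p_1(L,h)=p(h-1,L-1)$). Nothing is missing.
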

Sample values of $p(L,h)$ are given in Table~\ref{tab::pLh}.
\begin{table}[h]
\begin{center}
\tabcolsep=1ex
\begin{tabular}{r@{~~}||*{11}r}
$L\backslash h\!$
    &     0 &   1 &   2  &  3 &   4 &   5 &   6  &  7  &  8 &   9 &  \llap10 \\
\hline
  0 &    1  & \\ 
  1 &    1  &  1  \\  
  2 &    1  &  0 &   1 \\  
  3 &    1  &  1 &   1 &   1 \\  
  4 &    1  &  0 &   2 &   0 &   1 \\  
  5 &    1  &  1 &   2 &   2 &   1 &   1 \\  
  6 &    1  &  0 &   2 &   0 &   2 &   0 &   1 \\  
  7 &    1  &  1 &   3 &   2 &   2 &   3 &   1 &   1 \\ 
  8 &    1  &  0 &   3 &   0 &   2 &   0 &   3 &   0 &   1 \\ 
  9 &    1  &  1 &   3 &   3 &   2 &   2 &   3 &   3 &   1 &   1 \\ 
 10 &    1  &  0 &   4 &   0 &   2 &   0 &   2 &   0 &   4 &   0 &   1
\end{tabular}
\end{center}
\caption{Sample values of $p(L,h)$ for $0\leq h\leq L\leq10$}
\label{tab::pLh}
\end{table}

\subsection{Generating functions of $p(L,h)$}

In the same way we obtained generating functions for $s(L,h)$, we deduce generating
functions for $p(L,h)$ from the recurrence above. We consider the generating functions
$\smash{\dsp \mathcal{P}_h(X) = \sum_{L\geq 0} p(L,h)X^L}$.
\begin{theorem}
One has: $\dsp\mathcal{P}_0(X) = \frac{1}{1-X}$, $\dsp\mathcal{P}_1(X) = \frac{X}{1-X^2}$
and for all $h\geq 2$,
\[
\mathcal{P}_h(X) = \dfrac{1}{1-X^{h+1}}\left(\sum_{L=0}^{h-1} p(L,h)X^L
 + \dfrac{X^h}{1-X^{h-1}}\sum_{r=0}^{h-2}p(h-1,r)X^{r}\right).
\]
\end{theorem}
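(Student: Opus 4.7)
\medskip

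The plan is to start from the definition $\mathcal{P}_h(X)=\sum_{L\geq 0}p(L,h)X^L$, split off the initial segment $L<h$, and then apply the recurrence from the previous theorem to the tail. More precisely, I would write
\[
\mathcal{P}_h(X) \;=\; \sum_{L=0}^{h-1} p(L,h)\,X^L \;+\; \sum_{L\geq h} p(L,h)\,X^L,
\]
and observe that on the tail $L\geq h$ the identity $p(L,h)=p(L-h-1,h)+p(h-1,L-1)$ holds (the boundary case $L=h$ is fine since $p(-1,h)=0$ and $p(h-1,h-1)=1=p(h,h)$). Re-indexing then yields
\[
\sum_{L\geq h} p(L-h-1,h)\,X^L \;=\; X^{h+1}\,\mathcal{P}_h(X),
\]
which produces the factor $1-X^{h+1}$ after rearrangement.

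The second piece of the tail, $\sum_{L\geq h} p(h-1,L-1)\,X^L = X\sum_{L\geq h-1} p(h-1,L)\,X^L$, is where the real bookkeeping happens and is the step I expect to be the main obstacle. Here I would use the periodicity built into the definition of $p$: for $L\geq h-1$, one has $p(h-1,L)=p(h-1,L\bmod(h-1))$. Writing every such $L$ uniquely as $L=q(h-1)+r$ with $q\geq 1$ and $0\leq r\leq h-2$ gives
\[
\sum_{L\geq h-1}p(h-1,L)\,X^L \;=\; \Bigl(\sum_{r=0}^{h-2} p(h-1,r)\,X^r\Bigr)\sum_{q\geq 1}X^{q(h-1)} \;=\; \dfrac{X^{h-1}}{1-X^{h-1}}\sum_{r=0}^{h-2} p(h-1,r)\,X^r.
\]
One must be careful that the term $L=h-1$ is correctly included (it corresponds to $q=1,r=0$, and $p(h-1,h-1)=1=p(h-1,0)$ as required).

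Combining the two pieces yields
\[
(1-X^{h+1})\,\mathcal{P}_h(X) \;=\; \sum_{L=0}^{h-1}p(L,h)\,X^L \;+\; \dfrac{X^h}{1-X^{h-1}}\sum_{r=0}^{h-2}p(h-1,r)\,X^r,
\]
from which the announced formula follows immediately by dividing by $1-X^{h+1}$. The cases $h=0$ and $h=1$ are verified directly from the table (or the definition): $p(L,0)=1$ for all $L\geq 0$ giving $\mathcal{P}_0(X)=1/(1-X)$, and $p(L,1)$ alternates $0,1,0,1,\dots$ starting at $L=0$, giving $\mathcal{P}_1(X)=X/(1-X^2)$.
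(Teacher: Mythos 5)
Your proposal is correct and follows essentially the same route as the paper: split off the terms $L<h$, apply the recurrence $p(L,h)=p(L-h-1,h)+p(h-1,L-1)$ to the tail (with the harmless boundary check at $L=h$), recover $X^{h+1}\mathcal{P}_h(X)$ from the first piece, and use the periodicity $p(h-1,L)=p(h-1,L\bmod(h-1))$ to sum the second piece as a geometric series; your $h=0,1$ verifications are also fine. The only difference is trivial bookkeeping in the second sum (you index $q\geq1$ after shifting by $1$, the paper indexes $q\geq0$ after shifting by $h$), so nothing further is needed.
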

\begin{proof}
One has:
\begin{eqnarray*}
\mathcal{P}_0(X) &=&\sum_{L\geq0} p(L,0)X^L = \sum_{L\geq0} X^L = \frac{1}{1-X}\\
\mathcal{P}_1(X) &=&\sum_{L\geq0} p(L,1)X^L \\
&=& p(0,1) + p(1,1)X+\sum_{L\geq2} p(L,1)X^L\\
&=&X + \sum_{L\geq2} (p(L-2,1)+p(0,L-1))X^L\\
&=&X + X^2\sum_{L\geq0} p(L,1)X^L\\
&=&X + X^2\,\mathcal{P}_1(X).
\end{eqnarray*}
Hence, $\dsp{\mathcal{P}_1(X) = \frac{X}{1-X^2}}$.

For all $h\geq 2$, we have
\begin{eqnarray*}
\mathcal{P}_h(X) &=& \sum_{L\geq 0} p(L,h)X^L \\
&=& \sum_{L=0}^{h-1} p(L,h)X^L + \sum_{L\geq h} p(L,h)X^L\\
&=& \sum_{L=0}^{h-1} p(L,h)X^L + X^h\sum_{L\geq 0} p(L+h,h)X^L\\
&=& \sum_{L=0}^{h-1} p(L,h)X^L + X^h\sum_{L\geq 0} (p(L-1,h)+p(h-1,L+h-1))X^L\\
&=& \sum_{L=0}^{h-1} p(L,h)X^L + X^h\sum_{L\geq 0} p(L-1,h)X^L+ X^h\sum_{L\geq 0}p(h-1,L)X^L\\
&=& \sum_{L=0}^{h-1} p(L,h)X^L + X^{h+1}\,\mathcal{P}_h(X) + X^h\sum_{q\geq0}\sum_{r=0}^{h-2}p(h-1,q(h-1)+r)X^{q(h-1)+r}\\
&=& \sum_{L=0}^{h-1} p(L,h)X^L + X^{h+1}\,\mathcal{P}_h(X) + X^h\sum_{q\geq0}X^{q(h-1)}\sum_{r=0}^{h-2}p(h-1,r)X^{r}\\
&=& X^{h+1}\,\mathcal{P}_h(X) + \sum_{L=0}^{h-1} p(L,h)X^L + \dfrac{X^h}{1-X^{h-1}}\sum_{r=0}^{h-2}p(h-1,r)X^{r}.\\
\end{eqnarray*}
Finally, we get
\begin{eqnarray*}
\mathcal{P}_h(X) &=& \dfrac{1}{1-X^{h+1}}\left(\sum_{L=0}^{h-1} p(L,h)X^L + \dfrac{X^h}{1-X^{h-1}}\sum_{r=0}^{h-2}p(h-1,r)X^{r}\right)\\
&=& \dfrac{(1-X^{h-1})\sum_{L=0}^{h-1} p(L,h)X^L+X^h\,\sum_{r=0}^{h-2}p(h-1,r)X^{r}}{(1-X^{h-1})\,(1-X^{h+1})}\\
&=& \dfrac{\mathcal{G}_h(X)}{(1-X^{h-1})\,(1-X^{h+1})}
\end{eqnarray*}
where $\mathcal{G}_h(X)\in\Z[X]$ and $\deg(\mathcal{G}_h) \leq 2h-2$.
\end{proof}

Sample values of $\mathcal{P}_h(X)$ are given in Table~\ref{tab::palgen} 
\begin{table}[h]
\[
\begin{array}{cccccc}
\mathcal{P}_2(X) &=& \dfrac{X}{(1-X)(1-X^3)} & \mathcal{P}_3(X) &=& \dfrac{X}{(1-X^2)(1-X^4)}\\
\noalign{\vskip\jot}
\mathcal{P}_4(X) &=& \dfrac{X+X^2+X^3}{(1-X^3)(1-X^5)} & \mathcal{P}_5(X) &=& \dfrac{X+X^3+X^7}{(1-X^4)(1-X^6)}\\
\noalign{\vskip\jot}
\mathcal{P}_6(X) &=& \dfrac{X+X^2+X^3+2\,X^4+X^5+X^8}{(1-X^5)(1-X^7)}\span\span\span
\end{array}
\]
\caption{Sample values of $\mathcal{P}_h(X)$}
\label{tab::palgen}
\end{table}

\subsection{Asymptotic behaviour of $p(L,h)$}

As before, from the generating function, we deduce an expression of $p(L,h)$ which
highlights its asymptotic behaviour. We prove the following theorem.
\begin{theorem}\label{th::pLh-linear}
For all $h\geq2$ there exist $u_0,\dots,u_{h-2},v_0,\ldots,v_h \in \Q$ such that:
\begin{itemize}
\item if $h$ is even then
\[
\forall L\geq 0,\quad p(L,h) = \alpha\,L +  u_{L\bmod(h-1)} + v_{L\bmod(h+1)}
\]
\item if $h$ is odd then
\[
\forall L\geq 0,\quad p(L,h) = \alpha\,(1-(-1)^L)\,L +  u_{L\bmod(h-1)} + v_{L\bmod(h+1)}
\]
\end{itemize}
where $\dsp\alpha=\frac{1}{h^2-1}\sum_{i=1}^{\left\lceil \frac{h-1}{2} \right\rceil}\varphi(h+1-2i).$
\end{theorem}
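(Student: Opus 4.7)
The plan is to mirror the proof of Theorem~\ref{th::sgen-quadratic} by performing a partial fraction decomposition of the generating function $\mathcal{P}_h(X) = \mathcal{G}_h(X)/((1-X^{h-1})(1-X^{h+1}))$ and reading off the coefficient of $X^L$ term by term. The parity of $h$ drives the argument because it controls $\gcd(1-X^{h-1},1-X^{h+1})$: for even $h$, $\gcd(h-1,h+1)=1$ and the only shared factor is $1-X$, so $X=1$ is the unique double pole; for odd $h$, $\gcd(h-1,h+1)=2$ and the shared factor is $1-X^2$, so both $X=1$ and $X=-1$ are double poles.

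I would first establish, in analogy with Lemma~\ref{lem::sgen-partfrac}, that for even $h$ one can write $\mathcal{P}_h(X) = R(X)/(1-X)^2 + A(X)/(1-X^{h-1}) + B(X)/(1-X^{h+1})$ with $\deg R<2$, $\deg A < h-1$, $\deg B < h+1$, after factoring out the shared $(1-X)$ and using pairwise coprimality of $(1-X)^2$, $(1-X^{h-1})/(1-X)$ and $(1-X^{h+1})/(1-X)$. The same reasoning for odd $h$, after factoring out the shared $1-X^2$, yields $\mathcal{P}_h(X) = R(X)/(1-X^2)^2 + A(X)/(1-X^{h-1}) + B(X)/(1-X^{h+1})$ with $\deg R < 4$. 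Expanding $A(X)/(1-X^{h-1})$ and $B(X)/(1-X^{h+1})$ as geometric series then supplies the periodic terms $u_{L\bmod(h-1)}$ and $v_{L\bmod(h+1)}$.

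The bulk of the work is analyzing the term involving $R$. For even $h$, $R(X)/(1-X)^2$ contributes a clean linear expression $\alpha L + \text{const}$, and the proof essentially ends. For odd $h$, the further split $(1-X^2)^2 = (1-X)^2(1+X)^2$ over $\mathbb{Q}$ produces a contribution of the form $(\tilde r + (-1)^L \tilde s)(L+1) + \text{const}$, and to match the theorem's form $\alpha(1-(-1)^L)L$ one needs $\tilde s = -\tilde r$, equivalently $\mathcal{G}_h(-1) = -\mathcal{G}_h(1)$. I expect this identity to be the main obstacle, and I would prove it using the explicit expression $\mathcal{G}_h(X) = (1-X^{h-1})\sum_{L=0}^{h-1} p(L,h)X^L + X^h \sum_{r=0}^{h-2} p(h-1,r)X^r$: when $h$ is odd, $h-1$ is even so the first summand vanishes at both $\pm1$, and the required identity reduces to $\sum_r p(h-1,r)(-1)^r = \sum_r p(h-1,r)$, that is, $p(h-1,r)=0$ for every odd $r$. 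This vanishing follows from a direct combinatorial observation: a palindrome $w$ of even length $h-1$ satisfies $w_i = w_{h-2-i}$, so its height equals twice the number of $1$'s in its first half, hence is even.

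Finally, in both parities $\alpha$ is the residue of $(1-X)^2\mathcal{P}_h(X)$ at $X=1$. Computing $\lim_{X\to 1} (1-X)^2 \mathcal{P}_h(X) = \mathcal{G}_h(1)/(h^2-1)$ and evaluating $\mathcal{G}_h(1) = \sum_{r=0}^{h-2} p(h-1,r) = p(h-1)-1$, I would invoke the de Luca--De Luca closed form $p(L) = 1 + \sum_{i=0}^{\lceil L/2 \rceil - 1} \varphi(L-2i)$ at $L=h-1$ and reindex $j=i+1$ to obtain $\alpha = \frac{1}{h^2-1}\sum_{j=1}^{\lceil (h-1)/2 \rceil} \varphi(h+1-2j)$, matching the stated formula.
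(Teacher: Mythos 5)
Your proposal is correct and follows essentially the same route as the paper: a partial-fraction decomposition of $\mathcal{P}_h(X)$ split by the parity of $h$ (the paper's Lemma~\ref{lem::pgen-partfrac}), the observation that palindromes of even length have even height (Lemma~\ref{lem::pal-even-odd-0}) to force the residues at $X=\pm1$ to be opposite, and the residue computation $\alpha=\mathcal{G}_h(1)/(h^2-1)$ combined with the de Luca--De Luca formula for $p(h-1)$. The only differences are cosmetic: you package the odd-$h$ double poles as $R(X)/(1-X^2)^2$ before splitting, where the paper writes $Q(X)/(1+X)^2+R(X)/(1-X)^2$ directly.
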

Before proving this theorem, we need some lemmas.
\begin{lemma}\label{lem::pgen-partfrac}
For all $h\geq 2$:
\begin{itemize}
\item if $h$ is even then there exist $R,A,B\in\Q[X]$ such that $\deg(R)<2$,
$\deg(A)<h-1$, $\deg(B)<h+1$ and
\begin{equation}\label{eqn::pgen-partfrac-even}
\mathcal{P}_h(X) = \dfrac{R(X)}{(1-X)^2}+\dfrac{A(X)}{1-X^{h-1}}+\dfrac{B(X)}{1-X^{h+1}}.
\end{equation}
\item if $h$ is odd then there exist $Q,R,A,B\in\Q[X]$ such that $\deg(Q)<2$, $\deg(R)<2$,
$\deg(A)<h-1$, $\deg(B)<h+1$ and
\begin{equation}\label{eqn::pgen-partfrac-odd}
\mathcal{P}_h(X) = \dfrac{Q(X)}{(1+X)^2}+\dfrac{R(X)}{(1-X)^2}+\dfrac{A(X)}{1-X^{h-1}}+\dfrac{B(X)}{1-X^{h+1}}
\end{equation}
\end{itemize}
\end{lemma}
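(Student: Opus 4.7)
The plan is to apply a standard partial fraction decomposition to the closed form $\mathcal{P}_h(X) = \mathcal{G}_h(X)/((1-X^{h-1})(1-X^{h+1}))$ obtained in the previous theorem. Since $\deg\mathcal{G}_h\le 2h-2 < 2h$, the rational function is proper and the decomposition has no polynomial part. The only subtlety is that $1-X^{h-1}$ and $1-X^{h+1}$ are not coprime; their common roots are exactly the $\gcd(h-1,h+1)$-th roots of unity, and this gcd equals $1$ when $h$ is even and $2$ when $h$ is odd. This dictates how to refine the denominator before applying partial fractions.

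For $h$ even, $1-X^{h-1}$ and $1-X^{h+1}$ share only the root $X=1$, so the factorization
\[
(1-X^{h-1})(1-X^{h+1}) = (1-X)^2\cdot\frac{1-X^{h-1}}{1-X}\cdot\frac{1-X^{h+1}}{1-X}
\]
is a product of three pairwise coprime polynomials in $\Q[X]$. Standard partial fraction decomposition then expresses $\mathcal{P}_h$ as a sum of three terms with these denominators and with numerators $R$, $A_0$, $B_0$ of degrees strictly less than $2$, $h-2$ and $h$ respectively. Absorbing the factor $1-X$ from each of the last two denominators into its numerator, i.e.\ setting $A(X)=(1-X)A_0(X)$ and $B(X)=(1-X)B_0(X)$, yields the claimed form with $\deg A\le h-2<h-1$ and $\deg B\le h<h+1$.

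For $h$ odd, $1-X^{h-1}$ and $1-X^{h+1}$ share both roots $\pm 1$, so I would refine further to
\[
(1-X^{h-1})(1-X^{h+1}) = (1-X)^2(1+X)^2\cdot\frac{1-X^{h-1}}{1-X^2}\cdot\frac{1-X^{h+1}}{1-X^2},
\]
which is again a product of pairwise coprime factors: the two quotients have roots among the $d$-th roots of unity for divisors $d>2$ of $h-1$ and $h+1$ respectively, and these sets are disjoint since $\gcd(h-1,h+1)=2$. Standard partial fractions then gives $Q,R,A_0,B_0\in\Q[X]$ with degrees less than $2$, $2$, $h-3$ and $h-1$ respectively, and setting $A(X)=(1-X^2)A_0(X)$ and $B(X)=(1-X^2)B_0(X)$ yields the stated form with $\deg A\le h-2<h-1$ and $\deg B\le h<h+1$.

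Beyond the coprimality arguments, which follow immediately from the gcd observation, the only real check is that after absorbing $1-X$ (resp.\ $1-X^2$) into $A_0$ and $B_0$ the resulting $A$ and $B$ still fit strictly below the bounds $h-1$ and $h+1$. This works in both cases because the reduced polynomials $(1-X^{h\pm1})/(1-X)$ and $(1-X^{h\pm1})/(1-X^2)$ have degrees one and two less than $h\pm1$, leaving exactly the right amount of room; I expect no further obstacle.
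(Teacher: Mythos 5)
Your proof is correct and follows essentially the same route as the paper: pull out $(1-X)^2$ (and additionally $(1+X)^2$ when $h$ is odd) from the denominator $(1-X^{h-1})(1-X^{h+1})$, apply partial fractions to the resulting pairwise coprime factors, and absorb $1-X$ (resp. $1-X^2$) back into the numerators $A_0,B_0$, with the same degree bookkeeping. The gcd observation you use to justify coprimality is exactly the implicit content of the paper's case split on the parity of $h$.
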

\begin{proof}
If $h$ is even then $\mathcal{P}_h(X)$ may be written as
\[
\mathcal{P}_h(X) = \dfrac{\mathcal{G}_h(X)}{(1-X)^2\,\dfrac{1-X^{h-1}}{1-X}\,\dfrac{1-X^{h+1}}{1-X}}.
\]
Since $(1-X)^2$, $\dfrac{1-X^{h-1}}{1-X}$ and $\dfrac{1-X^{h+1}}{1-X}$ are pairwise coprime,
there exist $R,A_0,B_0\in\Q[X]$ such that $\deg(R)<2$, $\deg(A_0)<h-2$,
$\deg(B_0)<h$ and
\[
\mathcal{P}_h(X) = \dfrac{R(X)}{(1-X)^2}+\dfrac{A_0(X)}{\dfrac{1-X^{h-1}}{1-X}}+\dfrac{B_0(X)}{\dfrac{1-X^{h+1}}{1-X}}.
\]
We get the result with $A(X)=(1-X)A_0(X)$ and $B(X)=(1-X)B_0(X)$.

If $h$ is odd then $1-X^{h-1}$ and $1-X^{h+1}$ are also divisible by $1+X$ so that we may write
\[
\mathcal{P}_h(X) = \dfrac{\mathcal{G}_h(X)}{(1+X)^2\,(1-X)^2\,\dfrac{1-X^{h-1}}{1-X^2}\,\dfrac{1-X^{h+1}}{1-X^2}}.
\]
Since $(1+X)^2$, $(1-X)^2$, $\dfrac{1-X^{h-1}}{1-X^2}$ and $\dfrac{1-X^{h+1}}{1-X^2}$ are
pairwise coprime, there exist $Q,R,A_0,B_0\in\Q[X]$ such that $\deg(Q)<2$, $\deg(R)<2$,
$\deg(A_0)<h-3$, $\deg(B_0)<h-1$ and
\[
\mathcal{P}_h(X) = \dfrac{Q(X)}{(1+X)^2}+\dfrac{R(X)}{(1-X)^2}+\dfrac{A_0(X)}{\dfrac{1-X^{h-1}}{1-X^2}}+\dfrac{B_0(X)}{\frac{1-X^{h+1}}{1-X^2}}.
\]
We get the result with $A(X)=(1-X^2)A_0(X)$ and $B(X)=(1-X^2)B_0(X)$.

\end{proof}

\begin{lemma}\label{lem::pal-even-odd-0}
For all $L,h\in \N$ such that $L$ is even and $h$ is odd, $p(L,h)=0$.
\end{lemma}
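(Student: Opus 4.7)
The claim reduces to a parity observation about palindromes, so almost no machinery is needed. The plan is the following.

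First I would handle the base cases of the definition of $p(L,h)$. If $L=0$, then since $h$ is odd we have $h\neq 0$, so $p(0,h)=0$ directly from the definition. So we may assume $L>0$. In that case $p(L,h)=\#\pal(L,h\bmod L)$, and since $L$ is even, $h\bmod L$ has the same parity as $h$, hence is still odd (and lies between $0$ and $L$). So it suffices to prove $\pal(L,h')=\emptyset$ whenever $L$ is even and $h'$ is odd with $0\leq h'\leq L$.

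Next I would make the key structural observation. A palindrome $w=w_1w_2\cdots w_L$ of even length $L=2m$ satisfies $w_i=w_{L+1-i}$ for every $i$, so its letters split into $m$ disjoint pairs of equal letters, with no unpaired middle letter. Consequently $|w|_1 = 2\,|w_1\cdots w_m|_1$ is always even. In particular, no palindrome of even length can have an odd number of $1$'s, so $\pal(L,h')=\emptyset$ for $L$ even and $h'$ odd, regardless of whether the word is balanced.

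Combining the two steps gives $p(L,h)=0$ in all subcases, which proves the lemma. There is no real obstacle here; the only thing to be careful about is the reduction via $h\bmod L$, where we must note that the modulus preserves the parity of $h$ precisely because $L$ is even. Note that balance is not used at all: the statement is a purely combinatorial fact about palindromes of even length, and it also explains the pattern of zeros in the odd-indexed columns of even-indexed rows of Table~\ref{tab::pLh}.
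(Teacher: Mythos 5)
Your proof is correct and follows essentially the same route as the paper: reduce via $h\bmod L$ (parity preserved because $L$ is even), then observe that a palindrome of even length splits as $u\Tilde{u}$, forcing $|w|_1=2|u|_1$ to be even. The only minor difference is that you make the irrelevance of the balance condition explicit, which the paper leaves implicit.
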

\begin{proof}
By definition of $p(L,h)$, the result is obvious if $L=0$. Also, it is sufficient to prove
the result for $0\leq h\leq L$ because $h$ and $h\bmod L$ have the same parity if $L$ is
even. In this case, $p(L,h)$ is exactly the number of balanced palindromes of length $L$
and height $h$. Let $w$ be a palindrome of length $L$. If $L$ 
is even then $w=u\Tilde{u}$ for some $u\in\{0,1\}^*$ and $|w|_1 = 2|u|_1$. Hence, there exist
no palindrome of even length and odd height.
\end{proof}

We are now ready to prove the main theorem of this section.\\

\begin{proof}[of Theorem~\ref{th::pLh-linear}]

The proof is similar to the proof of Theorem~\ref{th::sgen-quadratic}. In
Equations~\ref{eqn::pgen-partfrac-even} and~\ref{eqn::pgen-partfrac-odd}, we write

$$    R(X) = \alpha+\beta\,(1-X), \quad
    Q(X) = \alpha'+\beta'\,(1+X)$$
    
    $$A(X) = \sum_{k=0}^{h-2} a_k X^k,\quad
    B(X) = \sum_{k=0}^{h} b_k X^k
   $$
and we get for all $h\geq2$:
\begin{itemize}
\item If $h$ is even then
  \[
  \mathcal{P}_h(X) =  \dfrac{\alpha}{(1-X)^2}+\dfrac{\beta}{1-X}+\dfrac{A(X)}{1-X^{h-1}}+\dfrac{B(X)}{1-X^{h+1}}
  \]
  and the series expansion of $\mathcal{P}_h(X)$ is
  \begin{eqnarray*}
    \mathcal{P}_h(X) &=& \sum_{L\geq0} (\alpha\,(L+1)+\beta)\,X^L
    + \sum_{n\geq0} A(X)\,X^{n(h-1)} + \sum_{n\geq0} B(X)\,X^{n(h+1)}\\
    &=& \sum_{L\geq0} (\alpha\,(L+1)+\beta)\,X^L
    + \sum_{n\geq0}  \sum_{k=0}^{h-2} a_k X^{n(h-1)+k} + \sum_{n\geq0} \sum_{k=0}^{h} b_k X^{n(h+1)+k}\\
    &=& \sum_{L\geq0} (\alpha\,L+\alpha+\beta + a_{L\bmod(h-1)} + b_{L\bmod(h+1)})\,X^L
  \end{eqnarray*}
  Hence
  \[
  \forall L \geq 0,~p(L,h) = \alpha\,L+\alpha+\beta + a_{L\bmod(h-1)} + b_{L\bmod(h+1)}
  \]

\item If $h$ is odd then
  \[
  \mathcal{P}_h(X) = \dfrac{\alpha'}{(1+X)^2}+\dfrac{\beta'}{1+X}+\dfrac{\alpha}{(1-X)^2}+\dfrac{\beta}{1-X}+\dfrac{A(X)}{1-X^{h-1}}+\dfrac{B(X)}{1-X^{h+1}}
  \]
  and its series expansion is
  \[
  \mathcal{P}_h(X) = \sum_{L\geq0} (\alpha'\,(-1)^L\,(L+1) + \beta'\,(-1)^L + \alpha\,(L+1) + \beta + a_{L\bmod(h-1)} + b_{L\bmod(h+1)})X^L.
  \]
  Hence
  \[
  \forall L \geq 0,~p(L,h) = \alpha'\,(-1)^L\,(L+1) + \beta'\,(-1)^L + \alpha\,(L+1) + \beta + a_{L\bmod(h-1)} + b_{L\bmod(h+1)}.
  \]
\end{itemize}
Considering the Taylor expansion of $(1-X)^2\mathcal{P}_h(X)$ at $X=1$ and, in case $h$ is even,
$(1+X)^2\mathcal{P}_h(X)$ at $X=-1$, we get
\[
\alpha = \dfrac{\mathcal{G}_h(1)}{h^2-1} 
       = \dfrac{1}{h^2-1}\sum_{r=0}^{h-2}p(h-1,r) 
       = \dfrac{1}{h^2-1}\sum_{i=1}^{\left\lceil \frac{h-1}{2} \right\rceil}\varphi(h+1-2i)
\]
and, if $h$ is odd,
\[
\alpha' = \dfrac{\mathcal{G}_h(-1)}{h^2-1}
 = \dfrac{(-1)^h}{h^2-1}\sum_{r=0}^{h-2}(-1)^r p(h-1,r)
 = \dfrac{-1}{h^2-1}\sum_{r=0}^{h-2} p(h-1,r)
\]
where the last equality is deduced from Lemma~\ref{lem::pal-even-odd-0}. Hence $\alpha'=-\alpha$.

Finally, we get the result with $u_i = \alpha+\beta+ a_i$ if
$h$ is even and $u_i = (1- (-1)^i)\alpha+\beta+(-1)^i\beta'+ a_i$ if $h$ is odd and
$v_i = b_i$.

\end{proof}

{\bf Acknowledgements}:
The authors would like to thank Julien Cassaigne for useful discussions and comments.

\bibliographystyle{alpha}
\bibliography{Biblio}

\end{document}